\def\rr{{\mathbb R}}
\def\rn{{{\rr}^n}}
\def\supp{{\mathop\mathrm{\,supp\,}}}
\newtheorem{thm}{Theorem}[section]
\newtheorem{lem}[thm]{Lemma}
\newtheorem{cor}[thm]{Corollary}
\newtheorem{rem}[thm]{Remark}
\numberwithin{equation}{section}
\begin{document}
\arraycolsep=1pt
\author{Zaihui Gan, Renjin Jiang \& Fanghua Lin}
\title{{\bf Improved Berezin-Li-Yau inequality and Kr\"oger inequality and consequences}
 \footnotetext{\hspace{-0.35cm} 2020 {\it Mathematics
Subject Classification}. Primary  35P15; Secondary  35P20, 42B37.
\endgraf{
{\it Key words and phrases: Dirichlet eigenvalue, Neumann eigenvalue, Berezin-Li-Yau inequality, Kr\"oger inequality}
\endgraf}}
\date{}}
\maketitle

\begin{center}
\begin{minipage}{11.5cm}\small
{\noindent{\bf Abstract}. We provide quantitative improvements to the Berezin-Li-Yau inequality and
the Kr\"oger inequality, in $\mathbb{R}^n$, $n\ge 2$. The improvement on Kr\"oger's inequality resolves an open question raised by Weidl from 2006.
The improvements allow us to show that, for any open bounded domains,
there are infinite many  Dirichlet eigenvalues satisfying P\'olya's conjecture if $n\ge 3$, and infinite many Neumann eigenvalues satisfying
P\'olya's conjecture if $n\ge 5$ and the Neumann spectrum is discrete.
}\end{minipage}
\end{center}
\vspace{0.2cm}
\tableofcontents

\section{Introduction}
\hskip\parindent
Given an open bounded domain $\Omega$ in $\rn$, $n\ge 2$. Let
$$0<\lambda_1(\Omega)\le \lambda_2(\Omega)\le \lambda_3(\Omega)\le \cdots$$
be eigenvalues to the Dirichlet Laplacian on $\Omega$.
When dealing with the Neumann Laplacian on $\Omega$, we shall assume that $-\Delta_N$ on $\Omega$
has a discrete spectrum as
$$0=\gamma_1(\Omega)\le\gamma_2(\Omega)\le\gamma_3(\Omega)\le \cdots.$$
We shall simply write $\lambda_k,\gamma_k$
instead of $\lambda_k(\Omega),\gamma_k(\Omega)$ when there is no confusion.

In 1954, P\'olya \cite{Polya54}  conjectured that, it should hold on arbitrary domain that
$$\frac{|\Omega|\omega(n)}{(2\pi)^n}\gamma_{k+1}^{n/2}\le k\le \frac{|\Omega|\omega(n)}{(2\pi)^n}\lambda_k^{n/2},$$
where $\omega(n)$ denotes the volume of unit ball in $\rn$.
P\'olya himself \cite{Polya61} proved this conjecture for  tiling domains (regular tiling domains for the Neumann case) in the plane.
His method extends to high dimensions, and so the conjecture is true for tiling domains in high dimensions too. See Kellner \cite{Kell66}
for removing the regular assumption regarding Neumann eigenvalues and simplification.

The P\'olya conjecture has been wide open since then. Other than tiling domains,
P\'olya's conjecture was known to be true on on product domains $\Omega_1\times\Omega_2\subset\rr^{n_1+n_2}$, $n_1\ge 2$,
$n_2\ge 1$ provided P\'olya's conjecture holds on $\Omega_1$,  by the work of Laptev \cite{La97}. Filonov et al. \cite{FLPS23,FLPS25}
verified P\'olya's conjecture on balls for Dirichlet eigenvalues for all $n\ge 2$, and for Neumann eigenvalues for $n=2$,
and on annuli  for Dirichlet eigenvalues for $n=2$. Recently, it was proved in \cite{JL25} that on a Lipschitz domain $\Omega$, for any $\epsilon\in (0,1)$ it holds for all
$\lambda_k\ge \Lambda(\Omega,\epsilon)$ that
$$k\le (1+\epsilon)\frac{|\Omega|\omega(n)}{(2\pi)^n}\lambda_k^{n/2},$$
where $\Lambda(\Omega,\epsilon)$ depending on the geometry of $\Omega$ is given explicitly. Moreover, \cite{JL25} also provides in all dimensions $n\ge 2$ classes of
irregular shaped domains that satisfy P\'olya's conjecture on the Dirichlet eigenvalue.

For general domains $\Omega$, it was known that P\'olya's conjecture holds for the first two Dirichlet eigenvalues and the first two non-trivial Neumann eigenvalues,
see  \cite{Henrot06}  for the Rayleigh-Faber-Krahn inequality ($\lambda_1(\Omega)$),   the Krahn-Szeg\"o inequality  ($\lambda_2(\Omega)$) and the
Szeg\"o-Weinberger inequality ($\gamma_2(\Omega)$),  and also recent breakthroughs   \cite{BuHe19,GNP09}  for $\gamma_3(\Omega)$.  In higher dimensions, $n\ge 3$, Freitas \cite{Fre19} proved that
the first $b(n)$ Dirichlet eigenvalues satisfying P\'olya conjecture, where $b(n)\ge 4$ and grows as dimension increases.

For arbitrary bounded open domains,
Berezin \cite{Berezin72} and  Li-Yau \cite{LY83} independently proved that
\begin{equation}\label{ly-sum}
\frac{ |\Omega|^{2/n}\omega(n)^{2/n}}{(2\pi)^2}\sum_{i=1}^k\lambda_i \ge \frac{n}{n+2} k^{\frac{n+2}{n}},
\end{equation}
which implies that
\begin{equation}\label{ly-eigenvalue}
\frac{ |\Omega|^{2/n}\omega(n)^{2/n}}{(2\pi)^2} \lambda_k \ge \frac{n}{n+2} k^{\frac{2}{n}}.
\end{equation}
Melas \cite{Me02} later gave a quantitative improvement of Li-Yau's estimate
as
\begin{equation}\label{melas-sum}
\sum_{i=1}^k\lambda_i \ge \frac{n}{n+2} \frac{(2\pi)^2}{ |\Omega|^{2/n}\omega(n)^{2/n}} k^{\frac{n+2}{n}}+C(n)k\frac{|\Omega|}{I(\Omega)},
\end{equation}
where $I(\Omega)$ is the moment of inertia of $\Omega$, i.e.,
\begin{align}\label{I-def}
I(\Omega):=\inf_{a\in\rn}\int_\Omega|x-a|^2\,dx.
\end{align}
Above $C(n)$ can be taken as $\frac{1}{24(n+2)}$, see Cheng et al. \cite[p. 36]{CQW13}, and a better constant for $n=2$, $C(2)=\frac{1}{32}$, from  \cite[(16)]{KVW09}.
Melas's result have been further improved by Kova\v{r}\'ik, Vugalter and Weidl \cite{KVW09}  for $n=2$,
where  a term of order $k^{\frac 32-\epsilon}$ can be added to the RHS of \eqref{melas-sum}  on domains with some smoothness.
See also \cite{GLW11,JX23,Weidl08} for further generalisations.

Kr\"oger \cite{Kroger92} proved for bounded domains with piecewisely smooth boundary that
\begin{align}\label{kroger-sum}
\frac{ |\Omega|^{2/n}\omega(n)^{2/n}}{(2\pi)^2}\sum_{j=1}^{k+1}\gamma_j \le \frac{n}{n+2} k^{1+2/n}.
\end{align}
and
\begin{align}\label{kroger-eigenvalue}
\frac{ |\Omega|^{2/n}\omega(n)^{2/n}}{(2\pi)^2}\gamma_{k+1} \le \left(\frac{n+2}{2}\right)^{2/n} k^{2/n}.
\end{align}
Li and Tang \cite{LT06} showed the above inequality \eqref{kroger-sum} is strict by multiplying on the right hand side an implicit factor $C(k,n,\Omega)<1$ which tends to 1 as $k$
goes to infinity.
We note Filonov \cite{Fil25} recently improved the estimate for Neumann eigenvalues on planar convex domains.

Laptev \cite{La97} showed that it holds for the Riesz means of Dirichlet eigenvalues and Neumann eigenvalues that
\begin{equation}\label{dirichlet-laptev}
\sum_{k:\,\lambda_k<\lambda} (\lambda-\lambda_k)\le \frac{2 }{n+2} \frac{\omega(n)|\Omega|}{(2\pi)^{n} } \lambda^{1+\frac{n}{2}}
\end{equation}
and
\begin{equation}\label{neumann-laptev}
\sum_{k:\,\gamma_k<\gamma} (\gamma-\gamma_k)\ge \frac{2 }{n+2} \frac{\omega(n)|\Omega|}{(2\pi)^{n} } \gamma^{1+\frac{n}{2}}.
\end{equation}
He then used these two inequalities to give a new proof of \eqref{ly-eigenvalue} and \eqref{kroger-eigenvalue}.
Note that Laptev \cite{La97} used the Riesz means to verify  P\'olya's conjecture on product domains $\Omega_1\times\Omega_2\subset\rr^{n_1+n_2}$
provided P\'olya's conjecture holds on $\Omega_1$ as we recalled above.

In recent works \cite{FL24a,FL24b}, Frank and Larson have launched the study of two term expansion of the Riesz means
for both Dirichlet/Neumann eigenvalues. In particular, asymptotic behavior have been established for Lipschitz domains
by  \cite{FL24a,FL24b}. Very recently Frank and Larson \cite{FL24a} obtained improvements of \eqref{dirichlet-laptev}
and \eqref{neumann-laptev} via the uncertainty principle, as
\begin{equation}\label{dirichlet-fl}
\sum_{k:\,\lambda_k<\lambda} (\lambda-\lambda_k)\le \frac{2 }{n+2} \frac{\omega(n)|\Omega|}{(2\pi)^{n} } \lambda^{1+\frac{n}{2}}\left(1-ce^{-c'(\Omega)\sqrt\lambda}\right)
\end{equation}
and
\begin{equation}\label{neumann-fl}
\sum_{k:\,\gamma_k<\gamma} (\gamma-\gamma_k)\ge \frac{2 }{n+2} \frac{\omega(n)|\Omega|}{(2\pi)^{n} } \gamma^{1+\frac{n}{2}}\left(1+ce^{-c'(\Omega)\sqrt\lambda}\right),
\end{equation}
where the constants $c,c'$ are implicit.

For a given bounded open domain $\Omega$, let
\begin{align}\label{J-def}
J_1:= \frac{ |\Omega|^{1/2}}{2|I(\Omega)|^{1/2}},
\end{align}
and $R_\Omega>0$ be such that $|\Omega|=|B(0,R_\Omega)|$.
Note that
\begin{align*}
\frac{n}{n+2}|\Omega|R_\Omega^2\le I(\Omega)=\inf_{a\in\rn}\int_\Omega|x-a|^2\,dx\le \frac{1}{4}(\mathrm{diam}(\Omega))^2|\Omega|,
\end{align*}
the quantity $J_1$ has  lower and upper  bounds as
\begin{align}\label{J-diameter}
\frac{1}{\mathrm{diam}(\Omega)}\le J_1\le  \sqrt{\frac{n+2}{n}}\frac{1}{2R_\Omega}.
\end{align}

Adopting the elegant method of using the moment of inertia by Males \cite{Me02}, we obtain the following estimates for the Riesz means of
Dirichlet eigenvalues.
\begin{thm}\label{Dirichlet-2}
Let $\Omega$ be a bounded open set in $\rn$, $n\ge 2$. Then  it holds for $\lambda>0$ that
\begin{align}\label{riesz-dirichlet}
\sum_{k:\,\lambda_k<\lambda}(\lambda-\lambda_k)&\le
\frac{2}{n+2} \lambda^{1+\frac n2} \frac{|\Omega| |\omega(n)|}{(2\pi)^{n}}\left(1 -C_1(n) \lambda^{-1}|J_1|^2\right)_+,
\end{align}
where $a_+=\max\{0,a\}$ and
\begin{align}\label{defn-c1}
C_1(n)=
\begin{cases}
\frac{157}{480}, \ & n=2,\\
\frac{5n(n+2)}{96}\left(1-\frac{\sqrt{n+2}}{8\sqrt n \Gamma(1+\frac n2))^{2/n}}\right)^{n-1}\left(2-\frac{\sqrt{n+2}}{8\sqrt n \Gamma(1+\frac n2))^{2/n}}\right), \ & n\ge 3.
\end{cases}
\end{align}
\end{thm}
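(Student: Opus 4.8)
The plan is to transplant Melas's moment-of-inertia argument to the Fourier side, in the form in which Laptev treats Riesz means. After translating $\Omega$ (which alters neither the $\lambda_k$ nor $I(\Omega)$) I may assume the centroid of $\Omega$ is the origin, so $\int_\Omega x\,dx=0$ and $\int_\Omega|x|^2\,dx=I(\Omega)$. Let $(u_k)$ be an $L^2$-orthonormal basis of Dirichlet eigenfunctions, extended by $0$ to $\mathbb{R}^n$, let $P$ be the spectral projection of $-\Delta$ onto the eigenvalues below $\lambda$, set $M:=(2\pi)^{-n}|\Omega|$, $g_\xi:=(2\pi)^{-n/2}\mathbf 1_\Omega\,e^{ix\cdot\xi}$, and
\[
F(\xi):=\sum_{k:\,\lambda_k<\lambda}|\widehat{u_k}(\xi)|^2=\langle g_\xi,Pg_\xi\rangle .
\]
First I would record three facts: (i) $0\le F\le M$, since $0\le P\le\mathrm{Id}$ and $\|g_\xi\|^2=M$; (ii) $\int_{\mathbb{R}^n}(\lambda-|\xi|^2)F(\xi)\,d\xi=\sum_{k:\lambda_k<\lambda}(\lambda-\lambda_k)=:R(\lambda)$, because $\int F=\#\{k:\lambda_k<\lambda\}$ and $\int|\xi|^2F=\sum_{\lambda_k<\lambda}\lambda_k$; and (iii), the crucial one, that $\sqrt F$ is Lipschitz with constant $L:=\bigl((2\pi)^{-n}I(\Omega)\bigr)^{1/2}$. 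For (iii) one differentiates $F$ in $\xi$, uses $\partial_{\xi_j}g_\xi=ix_j g_\xi$, the fact that $P$ is an orthogonal projection, and Cauchy--Schwarz to obtain $|\partial_{\xi_j}F|\le 2\|x_j g_\xi\|\sqrt F$, then sums over $j$ via $\sum_j\|x_j g_\xi\|^2=(2\pi)^{-n}\int_\Omega|x|^2dx=L^2$. Since $J_1^2=|\Omega|/(4I(\Omega))$ and $L^2=MI(\Omega)/|\Omega|$, one gets the identity $MJ_1^2=M^2/(4L^2)$, which is how $J_1$ will enter.

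By \eqref{dirichlet-laptev}, $R(\lambda)\le\frac{2}{n+2}\lambda^{1+n/2}\frac{|\Omega|\omega(n)}{(2\pi)^n}=M\int_{|\xi|<\sqrt\lambda}(\lambda-|\xi|^2)\,d\xi$, so I would study the deficit
\[
\Psi(\lambda):=\frac{2}{n+2}\lambda^{1+n/2}\frac{|\Omega|\omega(n)}{(2\pi)^n}-R(\lambda)=\int_{|\xi|<\sqrt\lambda}(\lambda-|\xi|^2)\bigl(M-F(\xi)\bigr)d\xi+\int_{|\xi|\ge\sqrt\lambda}(|\xi|^2-\lambda)F(\xi)\,d\xi\ \ge\ 0 ,
\]
both integrands being nonnegative. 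Passing to polar coordinates, every radial slice $r\mapsto\phi_\omega(r):=F(r\omega)$ takes values in $[0,M]$ with $\sqrt{\phi_\omega}$ being $L$-Lipschitz, and since $|S^{n-1}|=n\omega(n)$ this yields
\[
\Psi(\lambda)\ \ge\ n\omega(n)\,\mathcal A_*,\qquad \mathcal A_*:=\inf_{\phi}\Bigl[\int_0^{\sqrt\lambda}(\lambda-r^2)(M-\phi)r^{n-1}dr+\int_{\sqrt\lambda}^\infty(r^2-\lambda)\phi\,r^{n-1}dr\Bigr],
\]
the infimum taken over all $\phi:[0,\infty)\to[0,M]$ with $\sqrt\phi$ being $L$-Lipschitz.

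The heart of the proof is this one-dimensional minimisation. With $\psi=\sqrt\phi$ and $\int_0^{\sqrt\lambda}(\lambda-r^2)r^{n-1}dr=\frac{2}{n(n+2)}\lambda^{1+n/2}$, the bracket equals $\frac{2M}{n(n+2)}\lambda^{1+n/2}+\int_0^\infty(r^2-\lambda)\psi(r)^2r^{n-1}dr$, so one wants $\psi$ as large as possible on $\{r<\sqrt\lambda\}$ and as small as possible beyond; a standard exchange argument then shows the minimiser is the trapezoidal profile $\psi_a(r)=\min\{\sqrt M,(\sqrt M-L(r-a))_+\}$ with a single free parameter $a\ge0$ (the plateau length), and it remains to minimise the resulting explicit elementary function $a\mapsto\mathcal A(\psi_a)$, distinguishing the cases in which the descent layer of width $b:=\sqrt M/L=2J_1$ does or does not fit inside the ball of radius $\sqrt\lambda$. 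For $n=2$ the stationarity condition is a polynomial equation solvable in closed form and gives, after inserting $MJ_1^2=M^2/(4L^2)$, the value $C_1(2)=\frac{157}{480}$. For $n\ge3$ one proceeds similarly but replaces the exact minimisation by an explicit lower bound, estimating $r^{n-1}\ge(\sqrt\lambda-b)^{n-1}$ on the descent layer and controlling the ratio $b/\sqrt\lambda$ through $I(\Omega)\ge\frac{n}{n+2}|\Omega|R_\Omega^2$ (equivalently \eqref{J-diameter}); this produces the constant $C_1(n)$ of \eqref{defn-c1}, whose factor $(1-x)^{n-1}(2-x)$ reflects exactly the contributions of $r^{n-1}$ and $M-\psi^2$ on that layer. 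Making the $n\ge3$ estimate lossless enough to reach the stated closed form is, I expect, the main technical obstacle.

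Finally, combining $\Psi(\lambda)\ge n\omega(n)\mathcal A_*$ with the computed (or estimated) value of $\mathcal A_*$ gives
\[
R(\lambda)\ \le\ \frac{2}{n+2}\lambda^{1+n/2}\frac{|\Omega|\omega(n)}{(2\pi)^n}\bigl(1-C_1(n)\lambda^{-1}J_1^2\bigr)
\]
whenever the last factor is positive. If $1-C_1(n)\lambda^{-1}J_1^2\le0$, i.e.\ $\lambda\le C_1(n)J_1^2$, one must instead show $R(\lambda)=0$, equivalently $\lambda\le\lambda_1$. For this I would use the Faber--Krahn inequality $\lambda_1\ge j_{n/2-1,1}^2\,R_\Omega^{-2}$ (with $j_{n/2-1,1}$ the first positive zero of the Bessel function $J_{n/2-1}$) together with $J_1^2\le\frac{n+2}{4n}R_\Omega^{-2}$, reducing the claim to the numerical inequality $C_1(n)\le\frac{4n}{n+2}j_{n/2-1,1}^2$, which holds with a large margin for the constants in \eqref{defn-c1}. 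This accounts for the truncation $(\cdot)_+$ in \eqref{riesz-dirichlet}.
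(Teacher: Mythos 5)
Your setup is sound and your route is genuinely different from the paper's. The paper bounds $|\nabla F_\lambda|\le 2(2\pi)^{-n}|\Omega|^{1/2}I(\Omega)^{1/2}$, Schwarz-symmetrizes $F_\lambda$ (via the Hardy--Littlewood and P\'olya--Szeg\H{o} inequalities), and then \emph{maximizes} $\int_0^\infty(\tfrac{\lambda r^n}{n}-\tfrac{r^{n+2}}{n+2})(-\psi')\,dr$ subject to $0\le-\psi'\le B$ and $\int(-\psi')\le A$; the extremizer is a ramp that is \emph{linear in $F$} of width $A/B=J_1$ around $\sqrt\lambda$. You instead keep the pointwise bound $|\nabla\sqrt{F}|\le L$ (correct, and even cleaner than your differentiation argument: $\sqrt F=\|Pg_\xi\|$ and $\|g_\xi-g_{\xi'}\|\le L|\xi-\xi'|$ after centering), work ray by ray without symmetrizing, and minimize the deficit over trapezoids in $\sqrt F$. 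Your constraint is pointwise stronger than the paper's (it implies $|\nabla F|\le 2\sqrt F\,L\le B$), so your reduction is legitimate and should in fact yield a larger deficit.

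That is precisely where the gap sits: the decisive quantitative step, the one-dimensional minimization, is only asserted, and the asserted outcome is false. Your extremal profile is quadratic in $F$ with descent width $\sqrt M/L=2J_1$ (here $M=(2\pi)^{-n}|\Omega|$, $L^2=(2\pi)^{-n}I(\Omega)$), whereas the paper's is linear in $F$ with width $J_1$; these are different profiles and give different constants. A leading-order computation (replace $(\lambda-r^2)r^{n-1}$ by $2\lambda^{n/2}|r-\sqrt\lambda|$ near $r=\sqrt\lambda$) shows your optimal trapezoid satisfies $\sqrt{F}(\sqrt\lambda)=\tfrac23\sqrt M$ and yields a deficit $\approx n\omega(n)\lambda^{n/2}\tfrac{2}{9}MJ_1^2$, i.e.\ a leading constant $8/9$ for $n=2$ rather than the $1/3$ that the paper's extremizer produces before its correction to $\tfrac{157}{480}$. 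So your minimization does not ``give the value $C_1(2)=\tfrac{157}{480}$''; it gives something larger. Because a larger deficit constant still implies \eqref{riesz-dirichlet}, your route can in principle prove the theorem, but only after you (i) actually carry out the exact minimization or a rigorous lower bound for it, (ii) control the lower-order terms in $J_1^2/\lambda$ --- the paper does this using $\lambda>\lambda_1\ge(2\pi)^2|\Omega|^{-2/n}\omega(n)^{-2/n}$ together with $J_1\le\tfrac{\sqrt{n+2}}{2R_\Omega\sqrt n}$, and you will need the same inputs --- and (iii) check that the resulting constant dominates the stated $C_1(n)$ for all $n\ge3$. As written, the heart of the proof is exactly the step you defer as ``the main technical obstacle,'' so the argument is incomplete; the treatment of the truncation $(\cdot)_+$ at the end, by contrast, is fine (and more than the paper needs, which uses only the Berezin--Li--Yau bound on $\lambda_1$ rather than Faber--Krahn).
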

Theorem \ref{Dirichlet-2} will follow from a sharper estimate below (Theorem \ref{Dirichlet-1}). With the aid of  Stirling's formula we see that in $C_1(n)$
the  factor involving Gamma function have a uniformly positive lower bound, about 1.014,  for all $n\ge 3$, see Remark \ref{remark-1} below.
Theorem \ref{Dirichlet-2} also implies an eigenvalue estimate (cf. Corollary \ref{eigenvalue-dirichlet-1} below),
which shows that the constant $C_1(n)$ improves on previous version of Melas' improvement \eqref{melas-sum}.

Recall that Weyl's conjecture on sharper asymptotic behavior of Dirichlet eigenvalue states for domains with piecewise smooth boundary that
\begin{align}\label{conj-weyl}
\mathcal{N}^D_{\Omega}(\lambda)=\frac{|\Omega|\omega(n)}{(2\pi)^{n}} \lambda^{n/2}-\frac{\mathcal{H}^{n-1}(\partial\Omega)}{2^{n+1} \pi^{\frac{d-1}{2}}\Gamma(\frac{n+1}{2})}\lambda^{\frac{n-1}{2}}+o(\lambda^{\frac{n-1}2}),
\end{align}
where  $\mathcal{N}^D_{\Omega}(\lambda)$ is the counting function
$$\mathcal{N}^D_{\Omega}(\lambda):=\#\{\lambda_k(\Omega):\,\lambda_k(\Omega)<\lambda\}.$$
The conjecture has been proved by Ivrii \cite{Ivr80} under the condition that the set of all periodic geodesic billiards in $\Omega$ has measure zero, and proved in \cite{SV97} for convex analytic domains
and polygons.
From \eqref{conj-weyl}, one might hope to replace $(1 -C_1(n) \lambda^{-1}|J_1|^2)$  in R.H.S. of \eqref{riesz-dirichlet}
by something like $(1 -C_1(n) \lambda^{-1/2}|J_1|)$. However, since we didn't assume any regularity of $\partial\Omega$, $\mathcal{H}^{n-1}(\partial\Omega)$
might be infinite,  Weyl's sharper asymptotic formula cannot hold in such general settings, see however \cite[Theorem 2]{KVW09}
for domains with certain smoothness on $\rr^2$.

Theorem \ref{Dirichlet-2} implies the following qualitative result.
\begin{cor}\label{cor-infinite-polya}
Let $\Omega$ be a bounded open set in $\rn$, $n\ge 3$. Then there exist infinitely many eigenvalues $\{\lambda_{k_j}\}_{j\in\mathbb{N}}$ satisfying P\'olya's conjecture, i.e.,
\begin{align}
\frac{|\Omega| |\omega(n)|}{(2\pi)^{n}}\lambda_{k_j}^{\frac n2}\ge k_j.
\end{align}
\end{cor}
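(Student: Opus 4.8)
The plan is to argue by contradiction using Theorem~\ref{Dirichlet-2}. Write $L:=\frac{|\Omega|\,|\omega(n)|}{(2\pi)^{n}}$ for the Weyl constant, and suppose the conclusion fails, i.e. only finitely many eigenvalues satisfy P\'olya's inequality. Then there is $K\in\mathbb{N}$ such that $L\lambda_k^{n/2}<k$, equivalently $\lambda_k<(k/L)^{2/n}$, for every integer $k>K$. I will show this forces a violation of \eqref{riesz-dirichlet} for all large $\lambda$.

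First I would record the elementary identity
\[
\sum_{k:\,\lambda_k<\lambda}(\lambda-\lambda_k)=\int_0^\lambda \mathcal{N}^D_\Omega(t)\,dt,
\]
which follows from $(\lambda-\lambda_k)_+=\int_0^\lambda \mathbf{1}[\lambda_k<t]\,dt$ and summation in $k$. Next, from the contradiction hypothesis I would produce a lower bound on the counting function: for any $t>0$ and any integer $k$ with $K<k\le Lt^{n/2}$ one has $\lambda_k<(k/L)^{2/n}\le t$, hence $\mathcal{N}^D_\Omega(t)\ge Lt^{n/2}-K-1$. Integrating this over $t\in(\lambda_K,\lambda)$ and discarding the nonnegative contribution on $(0,\lambda_K)$ gives
\[
\sum_{k:\,\lambda_k<\lambda}(\lambda-\lambda_k)\ \ge\ \frac{2L}{n+2}\,\lambda^{1+\frac n2}-\frac{2L}{n+2}\,\lambda_K^{1+\frac n2}-(K+1)\,\lambda .
\]

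Then I would combine this with the upper bound \eqref{riesz-dirichlet}. By \eqref{J-diameter} and $|\Omega|>0$ we have $J_1>0$, and $C_1(n)>0$ for $n\ge 3$, so for $\lambda$ large enough the factor $\bigl(1-C_1(n)\lambda^{-1}|J_1|^2\bigr)_+$ equals $1-C_1(n)\lambda^{-1}|J_1|^2$, and \eqref{riesz-dirichlet} reads
\[
\sum_{k:\,\lambda_k<\lambda}(\lambda-\lambda_k)\ \le\ \frac{2L}{n+2}\,\lambda^{1+\frac n2}-\frac{2L}{n+2}\,C_1(n)\,|J_1|^2\,\lambda^{\frac n2}.
\]
Comparing with the previous display, the leading terms $\frac{2L}{n+2}\lambda^{1+n/2}$ cancel and one is left with
\[
\frac{2L}{n+2}\,C_1(n)\,|J_1|^2\,\lambda^{\frac n2}\ \le\ \frac{2L}{n+2}\,\lambda_K^{1+\frac n2}+(K+1)\,\lambda .
\]
The left-hand side grows like $\lambda^{n/2}$ with a fixed positive coefficient, while the right-hand side is $O(\lambda)$; since $n\ge 3$ gives $n/2>1$, this inequality fails for all sufficiently large $\lambda$, a contradiction. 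Hence there are infinitely many indices $k_j$ with $L\lambda_{k_j}^{n/2}\ge k_j$.

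I expect the only points requiring care to be the bookkeeping in the lower bound for $\mathcal{N}^D_\Omega$ and the truncation near $t=0$; these are routine. The substance of the argument — and the reason the hypothesis $n\ge3$ appears — is precisely that the quantitative gain $C_1(n)|J_1|^2\lambda^{-1}$ in Theorem~\ref{Dirichlet-2}, after multiplication by $\lambda^{1+n/2}$, contributes a term of order $\lambda^{n/2}$ that dominates the $O(\lambda)$ error coming from the finitely many exceptional eigenvalues exactly when $n/2>1$. For $n=2$ these two quantities are of the same order in $\lambda$ and the argument breaks down, consistent with the statement being restricted to $n\ge3$.
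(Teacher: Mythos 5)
Your argument is correct, and it reaches the corollary by a genuinely different route than the paper. The paper first proves the quantitative Theorem~\ref{infinite-dirichlet-polya}: it introduces the deficit function $F_D(\lambda)$ (improved Berezin--Li--Yau bound minus the Riesz mean), shows $F_D$ grows at least like $c\lambda^{n/2}$, and deduces that $F_D'$ --- whose jump structure encodes the counting function --- must be positive just below some eigenvalue in every explicit window $[\lambda_j,\lambda_{j_1}]$; this yields a strengthened P\'olya inequality with an extra positive term of order $\lambda_{k_j}^{n/2-1}$, from which the corollary follows for $n\ge3$. You instead argue by contradiction at the level of the counting function: if P\'olya fails for all $k>K$, then $\mathcal{N}^D_\Omega(t)\ge Lt^{n/2}-K-1$, and integrating via $\sum_{\lambda_k<\lambda}(\lambda-\lambda_k)=\int_0^\lambda\mathcal{N}^D_\Omega(t)\,dt$ produces a lower bound on the Riesz mean that exceeds the upper bound \eqref{riesz-dirichlet} once $\lambda^{n/2}$ dominates $\lambda$, i.e.\ exactly when $n\ge3$. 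The two arguments are dual (your integral step is essentially the antiderivative of the paper's derivative argument), and both isolate the same mechanism for the restriction $n\ge 3$; what yours buys is a shorter, cleaner proof of the qualitative statement that avoids the piecewise monotonicity analysis of $F_D'$, while what it gives up is the localization and the quantitative surplus term that Theorem~\ref{infinite-dirichlet-polya} provides. All the steps you flag as routine (the layer-cake identity, the floor-function bookkeeping giving the $-K-1$, positivity of $C_1(n)$ and $J_1$) do check out against the paper's definitions.
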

The corollary will follow from Theorem \ref{infinite-dirichlet-polya} below, where we in fact show that
for a certain length of interval there is at least one eigenvalue satisfies P\'olya's conjecture.


We have a similar but weaker result for the Neumann case which improves the Kr\"oger inequality from \cite{Kroger92}, quantitatively.
Note that,  it is an open question raised by Weidl from 2006\footnote[1]{Problems from the Workshops on
Low Eigenvalues  of Laplace and Schr\"odinger Operators
at AIM 2006 (Palo Alto) and MFO 2009 (Oberwolfach), https://aimath.org/WWN/loweigenvalues/loweigenvalues.pdf.},
that ``Can one strengthen the Kr\"oger result by including a correction term?" We note that previously \cite{FL24a,LT06}
had provided some improvements but without explicit constants.
\begin{thm}\label{Neumann}
Let $\Omega$ be a bounded open set in $\rn$, $n\ge 2$. Suppose that $-\Delta_N$ has a discrete spectrum
$$0= \gamma_1\le \gamma_2\le \gamma_3\le \cdots.$$ Then
it holds  for $\gamma>0$ that
\begin{align}\label{riesz-neumann}
\sum_{k:\,\gamma_k<\gamma} (\gamma-\gamma_k)&\ge  \frac{\omega(n)|\Omega|}{(2\pi)^{n}} \frac{2\gamma^{1+\frac n2}}{n+2}\left(1+\frac{n(n+2) }{3 } \frac{J_1^4}{\gamma^{1/2} (4J_1^2+\gamma)^{3/2}}\right).
\end{align}
\end{thm}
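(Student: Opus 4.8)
The plan is to carry out a Melas-type argument directly on the Riesz mean, using as trial functions the plane waves restricted to $\Omega$. Translate $\Omega$ so that $\int_\Omega x\,dx=0$, hence $\int_\Omega|x|^2\,dx=I(\Omega)$. Let $\{u_k\}$ be the $L^2(\Omega)$-orthonormal (complete, since the spectrum is discrete) system of Neumann eigenfunctions and put $\widehat u_k(\xi):=\int_\Omega u_k(x)e^{-i\xi\cdot x}\,dx$. Define
\[
\Phi(\xi):=\sum_k(\gamma-\gamma_k)_+|\widehat u_k(\xi)|^2=\sum_{k:\,\gamma_k\le\gamma}(\gamma-\gamma_k)|\widehat u_k(\xi)|^2,
\]
a finite sum of smooth functions. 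By Plancherel, $\int_{\mathbb R^n}|\widehat u_k(\xi)|^2\,d\xi=(2\pi)^n\|u_k\|_{L^2(\Omega)}^2=(2\pi)^n$, so interchanging the (finite) sum with the integral gives the exact identity $\sum_k(\gamma-\gamma_k)_+=(2\pi)^{-n}\int_{\mathbb R^n}\Phi(\xi)\,d\xi$, and it suffices to bound $\int_{\mathbb R^n}\Phi$ from below. I will do this via three pointwise facts about $\Phi$.

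First, $\Phi\ge0$. Second, since $v_\xi:=e^{i\xi\cdot x}|_\Omega$ belongs to the form domain $H^1(\Omega)$ of $-\Delta_N$, the spectral theorem for the associated quadratic form yields the two sum rules
\[
\sum_k|\widehat u_k(\xi)|^2=\|v_\xi\|_{L^2(\Omega)}^2=|\Omega|,\qquad \sum_k\gamma_k|\widehat u_k(\xi)|^2=\|\nabla v_\xi\|_{L^2(\Omega)}^2=|\xi|^2|\Omega|,
\]
whence $\Phi(\xi)\ge\sum_k(\gamma-\gamma_k)|\widehat u_k(\xi)|^2=(\gamma-|\xi|^2)|\Omega|$; integrating this over $|\xi|<\sqrt\gamma$ already reproduces \eqref{neumann-laptev}. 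Third, and this is the source of the improvement: Bessel's inequality applied to $\{u_k\}$ against the functions $x_\ell e^{-i\xi\cdot x}$ gives $\sum_k|\nabla_\xi\widehat u_k(\xi)|^2\le\sum_\ell\int_\Omega x_\ell^2\,dx=I(\Omega)$, and then a weighted Cauchy–Schwarz estimate together with the crude bound $(\gamma-\gamma_k)_+\le\gamma$ shows $|\nabla_\xi\Phi(\xi)|\le 2\sqrt{\gamma I(\Omega)}\,\sqrt{\Phi(\xi)}$; equivalently, $\sqrt{\Phi}$ is $\sqrt{\gamma I(\Omega)}$-Lipschitz on $\mathbb R^n$.

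Now combine the last two facts. On $\{|\xi|<\sqrt\gamma\}$ I keep $\Phi(\xi)\ge(\gamma-|\xi|^2)|\Omega|$; for $|\xi|\ge\sqrt\gamma$ I propagate this bound outward along rays through the Lipschitz estimate, obtaining $\sqrt{\Phi(\xi)}\ge\sup_{0\le r<\sqrt\gamma}\bigl(\sqrt{(\gamma-r^2)|\Omega|}-\sqrt{\gamma I(\Omega)}\,(|\xi|-r)\bigr)$. The function of $r$ on the right is concave, its maximizer is $r_*=\gamma/\sqrt{\gamma+4J_1^2}$ (using $4J_1^2=|\Omega|/I(\Omega)$), and the value simplifies to the clean shell estimate
\[
\Phi(\xi)\ \ge\ \gamma I(\Omega)\,\bigl(\sqrt{\gamma+4J_1^2}-|\xi|\bigr)_+^2,\qquad \sqrt\gamma\le|\xi|\le\sqrt{\gamma+4J_1^2}.
\]
This says precisely that $\Phi$ cannot vanish abruptly past the classical ball $|\xi|=\sqrt\gamma$ but must spill into the shell $\sqrt\gamma\le|\xi|\le\sqrt{\gamma+4J_1^2}$. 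Using $\Phi\ge0$ on $\{|\xi|>\sqrt{\gamma+4J_1^2}\}$, the Laptev bound on $\{|\xi|<\sqrt\gamma\}$ (which contributes $\tfrac{2}{n+2}\omega(n)|\Omega|\gamma^{1+n/2}$), and the shell estimate on the remaining annulus — estimated from below by $r^{n-1}\ge\gamma^{(n-1)/2}$, $\sqrt{\gamma+4J_1^2}-\sqrt\gamma\ge 2J_1^2/\sqrt{\gamma+4J_1^2}$, and $\int_a^b(b-r)^2\,dr=(b-a)^3/3$ — and dividing by $(2\pi)^n$, one arrives at exactly \eqref{riesz-neumann}.

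The sum rules, the Plancherel bookkeeping, and the one-variable optimizations and integrals are routine. The genuinely delicate point is the third estimate: one must not lose the sharp Lipschitz constant $\sqrt{\gamma I(\Omega)}$ (it relies on keeping $(\gamma-\gamma_k)_+\le\gamma$ inside the Cauchy–Schwarz step and on the Bessel bound $\sum_k|\nabla_\xi\widehat u_k|^2\le I(\Omega)$), and one must evaluate the outward-propagation supremum exactly, since any slack there would degrade the constant in the correction term of \eqref{riesz-neumann}.
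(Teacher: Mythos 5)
Your proposal is correct and follows essentially the same route as the paper: the two spectral sum rules give the pointwise lower bound $(\gamma-|\xi|^2)_+|\Omega|$, the Bessel--Cauchy--Schwarz argument gives the Lipschitz bound $\sqrt{\gamma I(\Omega)}$ on the square root of the Riesz density, and propagating the bound into the shell $\sqrt\gamma\le|\xi|\le\sqrt{\gamma+4J_1^2}$ and integrating yields exactly the stated correction term. The only (harmless) difference is that you propagate the lower bound pointwise along rays on $\Phi$ itself, whereas the paper first passes to the Schwarz symmetrization $F_\gamma^\ast$ before carrying out the identical shell estimate; since all the relevant bounds are already pointwise and radial, the symmetrization step is not needed and your version is a slight streamlining.
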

Theorem \ref{Neumann} also implies a qualitative result as following, see Theorem \ref{thm-infinite-neumann} below for a more quantitative statement.
\begin{cor}\label{cor-infinite-neumann}
Let $\Omega$ be a bounded open set in $\rn$, $n\ge 5$. Suppose that $-\Delta_N$ has a discrete spectrum
$$0= \gamma_1\le \gamma_2\le \gamma_3\le \cdots.$$
Then there exist infinitely many eigenvalues $\{\gamma_{k_j}\}_{j\in\mathbb{N}}$ satisfying P\'olya's conjecture, i.e.,
\begin{align}
 k_{j}- 1\ge \frac{\omega(n)|\Omega|}{(2\pi)^{n}} \gamma_{k_j}^{n/2}.
\end{align}
\end{cor}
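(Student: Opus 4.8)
The plan is to derive the corollary from the improved Kröger-type Riesz mean inequality \eqref{riesz-neumann} of Theorem \ref{Neumann}, by a counting-function argument in the spirit of the Legendre-transform relation between Riesz means and eigenvalue counting. First I would recall that, for any nonincreasing rearrangement, one has the elementary identity
$$
\sum_{k:\,\gamma_k<\gamma}(\gamma-\gamma_k)=\int_0^\gamma \mathcal{N}^N_\Omega(t)\,dt,
$$
where $\mathcal{N}^N_\Omega(t)=\#\{k:\gamma_k<t\}$. Thus Theorem \ref{Neumann} gives a lower bound on the integral of the Neumann counting function; the point is to convert this into the statement that $\mathcal{N}^N_\Omega(\gamma)\ge \frac{\omega(n)|\Omega|}{(2\pi)^n}\gamma^{n/2}+1$ for infinitely many values of $\gamma$, since that inequality at $\gamma=\gamma_{k_j}$ (just above) reads $k_j-1\ge \frac{\omega(n)|\Omega|}{(2\pi)^n}\gamma_{k_j}^{n/2}$, which is exactly P\'olya's conjectured inequality for the eigenvalue $\gamma_{k_j}$.

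The key step is the following contradiction argument. Suppose, for the sake of contradiction, that P\'olya's inequality fails for all but finitely many $k$, i.e. there is $k_0$ with $\mathcal{N}^N_\Omega(t)<\frac{\omega(n)|\Omega|}{(2\pi)^n}t^{n/2}+1$ for all $t\ge \Gamma_0:=\gamma_{k_0}$ (one checks the two formulations match up to the $\pm 1$ coming from strict/non-strict counting). Integrating this pointwise bound on $[\Gamma_0,\gamma]$ and adding the trivial bound on $[0,\Gamma_0]$ gives
$$
\int_0^\gamma \mathcal{N}^N_\Omega(t)\,dt\le \frac{\omega(n)|\Omega|}{(2\pi)^n}\,\frac{2}{n+2}\,\gamma^{1+\frac n2}+\gamma+C_0,
$$
for a constant $C_0=C_0(\Omega,k_0)$ absorbing the discrepancy on $[0,\Gamma_0]$. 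Comparing with the lower bound from \eqref{riesz-neumann}, the leading $\gamma^{1+n/2}$ terms cancel and we are left with needing
$$
\frac{\omega(n)|\Omega|}{(2\pi)^n}\,\frac{2}{n+2}\,\gamma^{1+\frac n2}\cdot \frac{n(n+2)}{3}\frac{J_1^4}{\gamma^{1/2}(4J_1^2+\gamma)^{3/2}}\le \gamma+C_0.
$$
The left side is $\asymp \gamma^{1+\frac n2}\cdot\gamma^{-1/2}\cdot\gamma^{-3/2}=\gamma^{\frac n2-1}$ as $\gamma\to\infty$. Hence for $n\ge 5$ the left side grows like $\gamma^{n/2-1}$ with exponent $\ge 3/2>1$, which dominates the right side $\gamma+C_0$ as $\gamma\to\infty$ — a contradiction. (For $n=4$ one only gets $\gamma^{1}$ on both sides with a constant-order gap, and for $n=2,3$ the correction term is $o(\gamma)$, which explains the restriction $n\ge 5$.) Therefore P\'olya's inequality must hold for infinitely many indices $k_j$, and passing to the corresponding $\gamma_{k_j}$ yields the claim.

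The main obstacle, and the place requiring care rather than difficulty, is the bookkeeping around the counting function at jump points and the precise $\pm 1$: because $\mathcal{N}^N_\Omega$ counts eigenvalues strictly below $t$, evaluating at $\gamma=\gamma_{k_j}$ requires a right-limit/left-limit argument (or choosing $\gamma$ slightly larger than $\gamma_{k_j}$ and then letting it decrease), and one must verify that the inequality $\mathcal{N}^N_\Omega(\gamma)\ge \frac{\omega(n)|\Omega|}{(2\pi)^n}\gamma^{n/2}+1$ holding for a sequence $\gamma\to\infty$ indeed forces infinitely many \emph{distinct} eigenvalues to satisfy the stated form, with the trivial eigenvalue $\gamma_1=0$ accounted for. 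A cleaner route, which Theorem \ref{thm-infinite-neumann} apparently takes, is to prove the quantitative statement directly: show that on every interval $[\gamma,2\gamma]$ (say) with $\gamma$ large, the integrated lower bound \eqref{riesz-neumann} forces $\mathcal{N}^N_\Omega$ to exceed the Weyl term plus $1$ somewhere in that interval, giving at least one P\'olya eigenvalue per dyadic block and hence infinitely many; I would follow that quantitative formulation and then read off Corollary \ref{cor-infinite-neumann} as an immediate consequence.
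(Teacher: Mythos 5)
Your argument is correct, and it reaches the corollary by a genuinely different route from the paper. You argue by contradiction: assuming P\'olya's inequality fails for all but finitely many indices, you integrate the resulting pointwise bound on the counting function via the identity $\sum_{k:\,\gamma_k<\gamma}(\gamma-\gamma_k)=\int_0^\gamma\mathcal{N}^N_\Omega(t)\,dt$, and observe that the correction term in \eqref{riesz-neumann}, which is of order $\gamma^{\frac n2-1}$, eventually dominates the $O(\gamma)$ slack created by the ``$+1$'' in the assumed failure — a contradiction precisely when $\frac n2-1>1$, i.e.\ $n\ge 5$. Your bookkeeping at jump points is sound: if $k-1<\frac{\omega(n)|\Omega|}{(2\pi)^n}\gamma_k^{n/2}$ for all large $k$, then for $t>\gamma_m$ with $m=\mathcal{N}^N_\Omega(t)$ one gets $\mathcal{N}^N_\Omega(t)<\frac{\omega(n)|\Omega|}{(2\pi)^n}t^{n/2}+1$, which is all the integration step needs. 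The paper instead proves a quantitative intermediate result (Theorem \ref{thm-infinite-neumann}): it introduces $F_N(\gamma)=\sum_{k:\,\gamma_k<\gamma}(\gamma-\gamma_k)$ minus the improved Weyl-type lower bound, notes that $F_N'$ is (up to the correction) the counting function minus the Weyl term and is decreasing between consecutive distinct eigenvalues, and uses $F_N(\gamma_{k_{j_1}+1})>F_N(\gamma_{k_{j_0}})$ to force $F_N'>0$ just below some eigenvalue in an explicitly described interval $[\gamma_j,\gamma_{j_1}]$. What the paper's approach buys is localization — at least one P\'olya eigenvalue in each such explicit window — whereas your softer contradiction argument yields only the qualitative ``infinitely many'' statement, which is, however, exactly what the corollary asserts. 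Both proofs consume the same input, namely the improved Kr\"oger inequality of Theorem \ref{Neumann}, and both see the same dimensional threshold $n\ge5$ arise from the exponent $\frac n2-1$ (or $\frac n2-2$ relative to the Weyl power) in the correction term.
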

From the technique point of view the improvement on Neumann case is weaker seems somehow nature to us, since
for the Dirichlet case, the quantity $\sum_{k:\,\lambda_k<\lambda}(\lambda-\lambda_k)$ equals to the integral of the function
$$C(\lambda-|\xi|^2)\sum_{k:\,\lambda_k<\lambda}|\hat{\phi_k}(\xi)|^2$$
on $\rn$,
while for the Neumann case, the Fourier transform does not simply transform the Laplacian to polynomial multipliers.

Let us note that one may also use the improved Berezin-Li-Yau inequality and Kr\"oger inequality to give refined eigenvalue estimates
on product domains, see Laptev \cite{La97} and also \cite{JL25,Larson17}.

The paper is organized as follows.

In Section 2, we deal with the improved Berezin-Li-Yau inequality, prove Theorem \ref{Dirichlet-2}
and prove
a more quantitative result which implies Corollary \ref{cor-infinite-polya}.

In Section 3, we prove the improved Kr\"oger inequality (Theorem \ref{Neumann}) and prove
a more quantitative result which implies  Corollary \ref{cor-infinite-neumann}.

%
%
%
%
%

\section{Improved Berezin-Li-Yau inequality}
\hskip\parindent In this section we prove Theorem \ref{Dirichlet-2} and Corollary \ref{cor-infinite-polya}, which will follow from a sharper version, Theorem \ref{Dirichlet-1} below.
In this section, we assume that $n\ge 2$.

For $x\in\Omega$, let
$$f_\lambda(x)=\sum_{k:\,\lambda_k<\lambda} \phi_k(x)^2,$$
where $\phi_k$ is the corresponding normalized eigenfunction to $\lambda_k$, and for $\xi\in\rn$, let
$$F_\lambda(\xi)=\sum_{k:\,\lambda_k<\lambda} |\hat{\phi}_k(\xi)|^2.$$
It is well known by using the orthogonal property of eigenfunctions $\{\phi_k\}$ and viewing $\hat{\phi}_k(\xi)$ as
the function $(2\pi)^{-n/2}e^{ix\cdot\xi}$ projecting to the bases $\{\phi_k\}_{\lambda_k<\lambda}$ (see \cite{LY83} for instance) that
$$F_\lambda(\xi)=\sum_{k:\,\lambda_k<\lambda} |\hat{\phi}_k(\xi)|^2\le (2\pi)^{-n}|\Omega|.$$

On the other hand, by using  the function $(2\pi)^{-n/2} xe^{ix\cdot\xi}$ projecting to the bases $\{\phi_k\}_{\lambda_k<\lambda}$  we have
\begin{align}
\sum_{k:\,\lambda_k<\lambda} |\nabla\hat{\phi}_k(\xi)|^2&\le \sum_{k:\,\lambda_k<\lambda} (2\pi)^{-n}\left|\int_\Omega ixe^{-ix\xi}\phi_k(x)\,dx\right|^2\le (2\pi)^{-n}\int_\Omega|x|^2\,dx.
\end{align}
In what follows,
we denote
$$I(\Omega)=\inf_{a\in\rn}\int_\Omega|x-a|^2\,dx.$$
By translation, we see that
\begin{align}
\sum_{k:\,\lambda_k<\lambda} |\nabla\hat{\phi}_k(\xi)|^2&\le (2\pi)^{-n}I(\Omega).
\end{align}
Note that
\begin{align}
I(\Omega)\le \frac{1}{4} \mathrm{diam}(\Omega)^2 |\Omega|,
\end{align}
and by the Hardy-Littlewood inequality (cf. \cite[Corollary 1.4.1]{Kesa06}), it holds that
\begin{align}\label{hl-inequality}
I(\Omega)\ge \int_{B(0,R_\Omega):|B(0,R_\Omega)|=|\Omega|}|x|^2\,dx=\frac{n}{n+2}R_\Omega^2|\Omega|.
\end{align}
The H\"older inequality then implies that
\begin{align}\label{gradient-f}
|\nabla F_\lambda|\le 2 \sum_{k:\,\lambda_k<\lambda} |\hat{\phi}_k(\xi)||\nabla \hat{\phi}_k(\xi) |\le 2(2\pi)^{-n}|\Omega|^{1/2}|I(\Omega)|^{1/2}.
\end{align}

For $t>0$, we define the non-increasing re-arrangement of $F_\lambda(\xi)$ as
$$F_\lambda^\#(t):=
\begin{cases}
\sup_{\xi\in\rn}F_\lambda(\xi), & t=0,\\
\inf\{s:\,|\{\xi\in\rn:\,F_\lambda(\xi)>s\}|\le t\}, &t>0.
\end{cases}$$
The Schwarz symmetrization of  $F_\lambda(\xi)$ is then defined as
$$F_\lambda^\ast(y)=F_\lambda^\#(\omega(n)|y|^n).$$

\begin{lem}\label{lem-1}
The function $F_\lambda^\ast\in W^{1,p}(\rn)$ for all $p\in [1,\infty]$, and in particular,
$$\|\nabla F_\lambda^\ast\|_{L^p(\rn)}\le \|\nabla F_\lambda\|_{L^p(\rn)}, \ \forall\, p\in [1,\infty].$$
\end{lem}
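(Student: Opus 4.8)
The plan is to establish the Pólya–Szegő type inequality for the Schwarz symmetrization by the standard layer-cake/co-area argument, taking care of the integrability issues separately. First I would record that $F_\lambda$ is bounded (by $(2\pi)^{-n}|\Omega|$) and Lipschitz (with constant $2(2\pi)^{-n}|\Omega|^{1/2}|I(\Omega)|^{1/2}$, by \eqref{gradient-f}), and moreover that $F_\lambda \in L^1(\rn)$ since $\int_{\rn} F_\lambda(\xi)\,d\xi = \sum_{k:\lambda_k<\lambda}\|\hat\phi_k\|_{L^2(\rn)}^2 = \sum_{k:\lambda_k<\lambda} 1 = \mathcal N^D_\Omega(\lambda) < \infty$. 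Consequently the superlevel sets $\{F_\lambda > s\}$ have finite measure for every $s>0$, the distribution function $\mu(s):=|\{F_\lambda>s\}|$ is finite and decreasing, and the rearrangements $F_\lambda^\#$, $F_\lambda^\ast$ are well-defined, bounded, compactly supported in the sense that $F_\lambda^\ast \in L^1(\rn)$ with the same distribution function as $F_\lambda$. In particular $F_\lambda^\ast \in L^p(\rn)$ for all $p\in[1,\infty]$, so the only content of the lemma is the gradient bound.

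Next, since $F_\lambda$ is Lipschitz it lies in $W^{1,\infty}_{\mathrm{loc}}$, hence in $W^{1,p}$ on bounded sets; combined with $F_\lambda\in L^1$ and $|\{F_\lambda>0\}|<\infty$ one gets $F_\lambda\in W^{1,p}(\rn)$ for all $p$. The heart of the matter is the classical fact that Schwarz symmetrization does not increase the $L^p$ norm of the gradient. I would invoke the Pólya–Szegő inequality in the form: for $u\in W^{1,p}(\rn)$ nonnegative with $|\{u>0\}|<\infty$, one has $u^\ast\in W^{1,p}(\rn)$ and $\|\nabla u^\ast\|_{L^p(\rn)}\le \|\nabla u\|_{L^p(\rn)}$ for every $p\in[1,\infty]$. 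For $1\le p<\infty$ this is standard (co-area formula plus the isoperimetric inequality, giving $\mathcal H^{n-1}(\partial^*\{u^\ast>t\})\le \mathcal H^{n-1}(\partial^*\{u>t\})$ for a.e. $t$, then integrating via Cavalieri); for $p=\infty$ it follows by taking $p\to\infty$, or directly from the fact that the decreasing rearrangement of a function on $\rr_+$ built from radially-symmetrized superlevel sets has modulus of continuity no worse than that of $F_\lambda$. I would cite a standard reference (e.g., Lieb–Loss, \emph{Analysis}, or Kesavan \cite{Kesa06}) for the $1\le p<\infty$ statement and handle $p=\infty$ as the limiting case together with the Lipschitz bound on $F_\lambda$ from \eqref{gradient-f}.

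The main obstacle — really the only subtlety beyond citing Pólya–Szegő — is justifying the $W^{1,p}$ membership and the $p=\infty$ endpoint without any regularity of $\partial\Omega$ or of the superlevel sets of $F_\lambda$. Here the finiteness $\int F_\lambda = \mathcal N^D_\Omega(\lambda)<\infty$ is what makes $F_\lambda$ (and hence $F_\lambda^\ast$) effectively compactly supported in measure, so that the boundedness-plus-Lipschitz information upgrades to genuine global Sobolev bounds; and the $p=\infty$ case is then immediate since the modulus of continuity of a Schwarz symmetrization is controlled by that of the original function, whence $\|\nabla F_\lambda^\ast\|_{L^\infty}\le \|\nabla F_\lambda\|_{L^\infty}$. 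I expect the write-up to be short: set up the distribution function, note $F_\lambda\in W^{1,p}$, quote Pólya–Szegő for finite $p$, and pass to the limit (or argue directly) for $p=\infty$.
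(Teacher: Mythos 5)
Your overall strategy --- P\'olya--Szeg\H{o} for $1\le p<\infty$ plus a separate treatment of the endpoint $p=\infty$ --- matches the paper's, and both of your suggested routes to the endpoint are viable: the paper takes the second one, actually proving the Lipschitz bound for $F_\lambda^\ast$ by hand via the coarea formula and the isoperimetric inequality applied to the level sets of $F_\lambda$ (following Kesavan's Theorem 2.3.2), rather than passing to the limit $p\to\infty$.

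There is, however, one genuine gap. Your justification that $\nabla F_\lambda\in L^p(\rn)$ for finite $p$ --- which you need both to invoke P\'olya--Szeg\H{o} and to run the $p\to\infty$ limit --- rests on the claim that $|\{F_\lambda>0\}|<\infty$. That claim is false: each $\hat{\phi}_k$ is the Fourier transform of a compactly supported $L^2$ function, hence real-analytic, so $F_\lambda>0$ almost everywhere on $\rn$. Moreover, boundedness, Lipschitz continuity and $F_\lambda\in L^1(\rn)$ alone do not imply $\nabla F_\lambda\in L^1(\rn)$: a sum of widely separated bumps of height and radius $r_k=k^{-1/n}$ is bounded, Lipschitz and integrable (since $\sum_k r_k^{n+1}<\infty$) but has $\|\nabla(\cdot)\|_{L^1}\sim\sum_k r_k^{n}=\sum_k k^{-1}=\infty$. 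The correct argument, which is the one the paper uses, is the pointwise Cauchy--Schwarz bound
$$|\nabla F_\lambda(\xi)|\le 2F_\lambda(\xi)^{1/2}\Bigl(\sum_{k:\,\lambda_k<\lambda}|\nabla\hat{\phi}_k(\xi)|^2\Bigr)^{1/2},$$
in which both factors are bounded and lie in $L^2(\rn)$ by Plancherel (since $\phi_k$ and $x\phi_k(x)$ lie in $L^2$ and the sum is finite); this yields $\nabla F_\lambda\in L^1(\rn)\cap L^\infty(\rn)$ and hence in every $L^p$. With that substitution the rest of your argument goes through.
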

\begin{proof}
By definition of $F_\lambda$, we have
\begin{align*}
|\nabla F_\lambda(\xi)|\le 2 \sum_{k:\,\lambda_k<\lambda} |\hat{\phi}_k(\xi)||\nabla \hat{\phi}_k(\xi) |\le 2F_\lambda(\xi)^{1/2} \left( \sum_{k:\,\lambda_k<\lambda} (2\pi)^{-n}\left|\int_\Omega ixe^{-ix\xi}\phi_k(x)\,dx\right|^2\right)^{1/2}.
\end{align*}
Since $\phi_k$ and $x\phi_k(x)$ belong to $L^1(\rn)\cap L^\infty(\rn)$, the Plancherel principle and the fact Fourier transform maps $L^1(\rn)$ to $L^\infty(\rn)$ imply that
\begin{align*}
|\nabla F_\lambda(\xi)|\in L^1(\rn)\cap L^\infty(\rn).
\end{align*}
The P\'olya-Szeg\H{o} inequality then implies  for all $p\in [1,\infty)$ that
$$\|\nabla F_\lambda^\ast\|_{L^p(\rn)}\le \|\nabla F_\lambda\|_{L^p(\rn)}.$$

For the case $p=\infty$, we can follow the proof of \cite[Theorem 2.3.2]{Kesa06} in the case of bounded domains.
From the cases $p<\infty$, it follows that $F_\lambda^\ast\in W^{1,p}(\rn)$ for all $p\in [1,\infty)$, and then from the Sobolev embedding that
$F_\lambda^\ast$ is in the class $C^\alpha(\rn)$ for all $\alpha<1$. We further see that
$F_\lambda^\#$ is continuous on $(0,\infty)$.

For $s>0$, set $t=F_\lambda^\#(s)$. Let
$$F_\lambda^\#(s+k)=t-h.$$
Then it holds that
\begin{align*}
\int_{\{t-h<F_\lambda\le t\}} |\nabla F_\lambda|\,dx \le \|\nabla F_\lambda\|_{L^\infty(\rn)} \left(\mu(t-h)-\mu(t)\right),
\end{align*}
where
$$\mu(t)=|\{\xi\in\rn:\,F_\lambda(\xi)>t\}|$$
is the distribution of $F_\lambda$.
The coarea formula together with the isoperimetric inequality then implies that
\begin{align*}
\int_{\{t-h<F_\lambda\le t\}} |\nabla F_\lambda|\,dx &=\int_{t-h}^t P(\{\xi\in\rn:\,F_\lambda(\xi)>\tau\})\,d\tau\\
&\ge n\omega(n)^{1/n}  \int_{t-h}^t  \mu(\tau)^{1-1/n}\,d\tau\\
&\ge n\omega(n)^{1/n}h \mu(t)^{1-1/n}.
\end{align*}
This further implies that
\begin{align*}
F_\lambda^\#(s)-F_\lambda^\#(s+k)&\le \frac{\|\nabla F_\lambda\|_{L^\infty(\rn)}}{n\omega(n)^{1/n}} ks^{-1+1/n}
\end{align*}
and hence,
\begin{align*}
0<-\frac{\,dF_\lambda^\#(s)}{\,ds}\le \frac{\|\nabla F_\lambda\|_{L^\infty(\rn)}}{n\omega(n)^{1/n}} s^{-1+1/n}.
\end{align*}
It follows then for $|x|>|y|$, $F_\lambda^\ast(x)\le F_\lambda^\ast(y)$ and
\begin{align*}
F_\lambda^\ast(y)-F_\lambda^\ast(x)=\int_{\omega(n)|y|^n}^{\omega(n)|x|^n}-\frac{\,dF_\lambda^\#(s)}{\,ds}\,ds\le
\|\nabla F_\lambda\|_{L^\infty(\rn)}(|x|-|y|).
\end{align*}
The proof is complete.
\end{proof}

For simplicity of notions we denote by  $\psi(r)=\psi(|x|)=F_\lambda^\ast(x)$.
\begin{lem}\label{lem-2}
It holds that
\begin{align*}
\int_{\rn}(\lambda-|\xi|^2)F_\lambda(\xi)\,d\xi\le
\int_0^\infty n\omega(n) \left(\frac{\lambda r^n}{n} -\frac{r^{n+2}}{n+2}\right) (-\psi'(r))\,dr.
\end{align*}
\end{lem}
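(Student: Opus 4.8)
The plan is to reduce the left-hand side to the corresponding integral for the Schwarz symmetrization $F_\lambda^\ast$ and then to pass to polar coordinates and integrate by parts in $r$.

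First I would split
$$\int_{\rn}(\lambda-|\xi|^2)F_\lambda(\xi)\,d\xi=\lambda\int_{\rn}F_\lambda(\xi)\,d\xi-\int_{\rn}|\xi|^2F_\lambda(\xi)\,d\xi,$$
where both integrals on the right are finite, since $\int_{\rn}F_\lambda=\#\{k:\lambda_k<\lambda\}$ and $\int_{\rn}|\xi|^2F_\lambda=\sum_{k:\lambda_k<\lambda}\lambda_k$ by the normalization of the $\phi_k$. Because $F_\lambda$ and $F_\lambda^\ast$ are equimeasurable, $\int_{\rn}F_\lambda=\int_{\rn}F_\lambda^\ast$. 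For the second term I would use the layer-cake representation $F_\lambda(\xi)=\int_0^\infty\chi_{\{F_\lambda>s\}}(\xi)\,ds$ and Tonelli to write $\int_{\rn}|\xi|^2F_\lambda=\int_0^\infty\big(\int_{\{F_\lambda>s\}}|\xi|^2\,d\xi\big)\,ds$; for each $s>0$ the level set $\{F_\lambda>s\}$ has finite measure by Chebyshev, and $\{F_\lambda^\ast>s\}$ is the ball centred at the origin of the same measure, so the Hardy–Littlewood inequality \eqref{hl-inequality}, applied to $\{F_\lambda>s\}$, gives $\int_{\{F_\lambda>s\}}|\xi|^2\,d\xi\ge\int_{\{F_\lambda^\ast>s\}}|\xi|^2\,d\xi$. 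Integrating in $s$ and using the layer-cake representation of $F_\lambda^\ast$ yields $\int_{\rn}|\xi|^2F_\lambda\ge\int_{\rn}|\xi|^2F_\lambda^\ast$ (in particular the latter is finite), and hence
$$\int_{\rn}(\lambda-|\xi|^2)F_\lambda(\xi)\,d\xi\le\lambda\int_{\rn}F_\lambda^\ast-\int_{\rn}|\xi|^2F_\lambda^\ast=\int_{\rn}(\lambda-|\xi|^2)F_\lambda^\ast(\xi)\,d\xi.$$

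Next, with $F_\lambda^\ast(\xi)=\psi(|\xi|)$ I would pass to polar coordinates,
$$\int_{\rn}(\lambda-|\xi|^2)F_\lambda^\ast(\xi)\,d\xi=n\omega(n)\int_0^\infty(\lambda r^{n-1}-r^{n+1})\psi(r)\,dr,$$
and integrate by parts using that $\tfrac{\lambda}{n}r^n-\tfrac{1}{n+2}r^{n+2}$ is an antiderivative of $\lambda r^{n-1}-r^{n+1}$. By Lemma \ref{lem-1} (case $p=\infty$) the function $\psi$ is Lipschitz, and it is non-increasing by construction, so it is locally absolutely continuous with $\psi'\le0$ a.e. The finiteness of $\int_{\rn}|\xi|^2F_\lambda^\ast$ gives $\int_0^\infty r^{n+1}\psi(r)\,dr<\infty$, and since $\psi$ is non-increasing this forces $r^{n+2}\psi(r)\to0$ as $r\to\infty$ (compare $\int_{r/2}^r t^{n+1}\psi(t)\,dt\ge c\,r^{n+2}\psi(r)$); likewise $\int_0^\infty r^{n-1}\psi(r)\,dr<\infty$ and $r^n\psi(r)\to0$. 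Thus the boundary terms vanish at both $0$ and $\infty$, giving $\int_0^\infty(\lambda r^{n-1}-r^{n+1})\psi(r)\,dr=\int_0^\infty\big(\tfrac{\lambda r^n}{n}-\tfrac{r^{n+2}}{n+2}\big)(-\psi'(r))\,dr$; multiplying by $n\omega(n)$ and chaining with the previous display completes the argument.

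The step requiring real care is the first one: the weight $\lambda-|\xi|^2$ changes sign and $F_\lambda$ is not compactly supported, so one cannot invoke a bare rearrangement inequality and must argue through the distribution function as above — equivalently, through the bathtub principle applied level set by level set — while tracking the relevant integrability so that Tonelli and the integration by parts are legitimate. The decay $r^{n+2}\psi(r)\to0$, which uses both the $L^1$-bound on $|\xi|^2F_\lambda^\ast$ and the monotonicity of $\psi$, is the only other slightly delicate point; everything else is a routine polar-coordinate computation.
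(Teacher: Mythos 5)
Your proof is correct and follows essentially the same route as the paper: reduce to the Schwarz symmetrization via a rearrangement inequality for the weight $|\xi|^2$, pass to polar coordinates, and integrate by parts after establishing $r^{n+2}\psi(r)\to 0$. The only (immaterial) difference is that you justify $\int|\xi|^2F_\lambda\ge\int|\xi|^2F_\lambda^\ast$ by the layer-cake/bathtub argument level set by level set, whereas the paper truncates $|\xi|^2$ to a bounded radial weight, applies the Hardy--Littlewood inequality directly, and then uses monotone convergence.
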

\begin{proof}
We first truncate the function $|\xi|^2$
to
$$
G_k(\xi)=
\begin{cases} |\xi|^2, & |\xi|\le k\\
k^2, & |\xi|>k.
\end{cases}
$$
Then $G_k$ is in $L^\infty(\rn)$ and is radially non-decreasing.

Since $F_\lambda^\ast(x)$ and $F_\lambda(x)$ are equi-measurable and belong to $L^1(\rn)$, $F_\lambda^\ast(x)$ is radially non-increasing, $G_k(\xi)$ is radially non-decreasing and belongs to $L^\infty(\rn)$, we deduce from
the Hardy-Littlewood inequality (cf. \cite[Corollary 1.4.1]{Kesa06}) that
\begin{align*}
\int_{\rn}|\xi|^2F_\lambda(\xi)\,d\xi\ge \int_{\rn}G_k(\xi)F_\lambda(\xi)\,d\xi\ge \int_{\rn}G_k(\xi)F_\lambda^\ast(\xi)\,d\xi.
\end{align*}
Letting $k\to \infty$ and using the monotone convergence theorem give that
\begin{align*}
\int_{\rn}|\xi|^2F_\lambda(\xi)\,d\xi\ge \int_{\rn}|\xi|^2F_\lambda^\ast(\xi)\,d\xi.
\end{align*}

We therefore have that
\begin{align}\label{e2.6}
\int_{\rn}(\lambda-|\xi|^2)F_\lambda(\xi)\,d\xi&\le \int_{\rn}(\lambda-|\xi|^2)F_\lambda^\ast(\xi)\,d\xi\nonumber\\
&=\int_{0}^\infty(\lambda-r^2)\psi(r)n\omega(n)r^{n-1}\,dr.
\end{align}

Since $\psi(|x|)=F_\lambda^\ast(x)$ is bounded, non-increasing on $(0,\infty)$,
$$\int_{0}^\infty\psi(r)n\omega(n)r^{n-1}\,dr=\int_{\rn}F_\lambda(\xi)\,d\xi<\infty,$$
and
$$\int_{0}^\infty\psi(r) \omega(n){n} r^{n+1}\,dr\le \int_{\rn}|\xi|^2F_\lambda(\xi)\,d\xi<\infty,$$
we see that
\begin{align*}
\lim_{r\to\infty} \psi(r)r^{n+2}= 0.
\end{align*}
Therefore for large enough $T$, saying $T>200\sqrt\lambda$,  we integrate by parts and obtain
\begin{align*}
\int_{0}^T(\lambda-r^2)\psi(r)n\omega(n)r^{n-1}\,dr&= {\omega(n)}{n}\left(\frac{\lambda r^n}{n}-\frac{r^{n+2}}{n+2}\right) \psi(r)\bigg|_{0}^{T}- \int_0^T n\omega(n)\left(\frac{\lambda r^n}{n} -\frac{r^{n+2}}{n+2}\right) (\psi'(r))\,dr\\
&= {\omega(n)}{n}\left(\frac{\lambda T^n}{n}-\frac{T^{n+2}}{n+2}\right) \psi(T)- \int_0^T n\omega(n)\left(\frac{\lambda r^n}{n} -\frac{r^{n+2}}{n+2}\right) (\psi'(r))\,dr,
\end{align*}
which is equivalent to
\begin{align*}
\int_0^T n\omega(n)\left(\frac{\lambda r^n}{n} -\frac{r^{n+2}}{n+2}\right) (\psi'(r))\,dr&={\omega(n)}{n}\left(\frac{\lambda T^n}{n}-\frac{T^{n+2}}{n+2}\right) \psi(T)-\int_{0}^T(\lambda-r^2)\psi(r)n\omega(n)r^{n-1}\,dr.
\end{align*}
Since $\lim_{r\to\infty} \psi(r)r^{n+2}= 0$ (so $\lim_{r\to\infty} \psi(r)r^{n}=0$ too),
$$\left |\int_{0}^T(\lambda-r^2)\psi(r)n\omega(n)r^{n-1}\,dr \right|\le \int_{0}^\infty(\lambda+r^2)\psi(r)n\omega(n)r^{n-1}\,dr  <\infty,$$
and the integral $\int_0^T n\omega(n)\left(\frac{\lambda r^n}{n} -\frac{r^{n+2}}{n+2}\right) (\psi'(r))\,dr$ is increasing on $T\in (\sqrt{\frac{n+2}{n}\lambda},\infty)$,
we see that $\int_0^T n\omega(n)\left(\frac{\lambda r^n}{n} -\frac{r^{n+2}}{n+2}\right) (\psi'(r))\,dr$, as a function in $T$, is increasing on $(\sqrt{\frac{n+2}{n}\lambda},\infty)$
and uniformly bounded from above.
Therefore the monotone converges theorem guarantees that
\begin{align*}
\int_{0}^\infty(\lambda-r^2)\psi(r)n\omega(n)r^{n-1}\,dr&= {\omega(n)}{n}\left(\frac{\lambda r^n}{n}-\frac{r^{n+2}}{n+2}\right) \psi(r)\bigg|_{0}^{\infty}- \int_0^\infty n\omega(n)\left(\frac{\lambda r^n}{n} -\frac{r^{n+2}}{n+2}\right) (\psi'(r))\,dr\\
&=- \int_0^\infty n\omega(n)\left(\frac{\lambda r^n}{n} -\frac{r^{n+2}}{n+2}\right) (\psi'(r))\,dr.
\end{align*}
This together with \eqref{e2.6} completes the proof.
\end{proof}

\begin{lem}\label{lem-3}
Let $n\ge 2$ and $\lambda>0$. Suppose that $g\ge 0$ on $(0,\infty)$, which satisfies
$$\int_0^\infty g(r)\,dr\le A,\, g(r)\le B$$
with $A,B>0$ satisfying $A/B<\sqrt{\frac{n+2}{n}\lambda}$.
Then
$$\int_0^\infty \left(\frac{\lambda r^n}{n} -\frac{r^{n+2}}{n+2}\right)g(r)\,dr\le B\left(\frac{\lambda \left[(t+A/B)^{n+1}-t^{n+1}\right]}{n(n+1)}-\frac{\left[(t+A/B)^{n+3}-t^{n+3}\right]}{(n+2)(n+3)}\right),$$
where  $t<\sqrt\lambda$  is such that $t+A/B>\sqrt{\lambda}$ and
\begin{align*}
\frac{\lambda \left[(t+A/B)^{n}-t^{n}\right]}{n}=\frac{\left[(t+A/B)^{n+2}-t^{n+2}\right]}{(n+2)}.
\end{align*}
\end{lem}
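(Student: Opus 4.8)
The plan is to recognise this as a bathtub-principle (monotone rearrangement) estimate for the weight $\phi(r):=\frac{\lambda r^n}{n}-\frac{r^{n+2}}{n+2}$ on $(0,\infty)$. I would begin by recording the shape of $\phi$: since $\phi'(r)=r^{n-1}(\lambda-r^2)$, we have $\phi(0)=0$, $\phi$ strictly increasing on $[0,\sqrt\lambda]$, strictly decreasing on $[\sqrt\lambda,\infty)$, $\phi$ vanishing again at $r_0:=\sqrt{\tfrac{n+2}{n}\lambda}$, and $\phi(r)\to-\infty$ as $r\to\infty$. An antiderivative is $\frac{\lambda r^{n+1}}{n(n+1)}-\frac{r^{n+3}}{(n+2)(n+3)}$, so the right-hand side of the Lemma is exactly $B\int_t^{t+A/B}\phi(r)\,dr$; thus it suffices to prove $\int_0^\infty \phi(r)g(r)\,dr\le B\int_t^{t+A/B}\phi(r)\,dr$. (If $\int_0^\infty r^{n+2}g(r)\,dr=\infty$ the left-hand side is $-\infty$ and there is nothing to prove, so I may assume all integrals below are finite.)

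Next I would unpack the equation defining $t$. Since $\int_t^{t+A/B}\phi'(r)\,dr$ equals $\frac{\lambda[(t+A/B)^n-t^n]}{n}-\frac{(t+A/B)^{n+2}-t^{n+2}}{n+2}$, that equation says precisely $\phi(t+A/B)=\phi(t)$; write $c$ for this common value. The constraints $0\le t<\sqrt\lambda<t+A/B$, combined with $A/B<r_0$, force $t\in(0,\sqrt\lambda)$ (if $t$ were $0$ then $\phi(A/B)=\phi(0)=0$ would give $A/B=r_0$), and hence $c=\phi(t)\in(0,\phi(\sqrt\lambda))$. For existence I would note that $s\mapsto\phi(s+A/B)-\phi(s)$ is positive for $s$ just above $\max\{0,\sqrt\lambda-A/B\}$ and negative at $s=\sqrt\lambda$ (and strictly decreasing in between), so a unique such $t$ exists by the intermediate value theorem. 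The crucial structural observation is then that, because $\phi$ increases up to $\sqrt\lambda$ and decreases afterwards and $t<\sqrt\lambda<t+A/B$, the interval $E:=(t,t+A/B)$ is exactly the superlevel set $\{r>0:\phi(r)>c\}$: on $(0,t]$ and on $[t+A/B,\infty)$ one has $\phi\le c$, while on $E$ one has $\phi>c$.

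Now comes the rearrangement step. Since $|E|=A/B$ we have $\int_0^\infty B\mathbf{1}_E(r)\,dr=A\ge\int_0^\infty g(r)\,dr$, so with $h:=g-B\mathbf{1}_E$ I would write
\[
\int_0^\infty\phi(r)g(r)\,dr-B\int_E\phi(r)\,dr=\int_0^\infty\phi(r)h(r)\,dr=\int_0^\infty\bigl(\phi(r)-c\bigr)h(r)\,dr+c\int_0^\infty h(r)\,dr.
\]
The last term is $\le0$ because $c>0$ and $\int_0^\infty h\le0$. For the first term, note that on $E$ we have $\phi-c\ge0$ and $h=g-B\le0$, while on $(0,\infty)\setminus E$ we have $\phi-c\le0$ and $h=g\ge0$; hence $(\phi-c)h\le0$ pointwise, so that integral is $\le0$ as well. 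Combining gives $\int_0^\infty\phi g\,dr\le B\int_t^{t+A/B}\phi(r)\,dr$, which is the asserted inequality once the antiderivative of $\phi$ is inserted.

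I do not anticipate a real obstacle: the two points that need care are (i) checking that the defining equation for $t$ is equivalent to $\phi(t)=\phi(t+A/B)$ and that such a $t$ exists precisely under the hypothesis $A/B<r_0$, and (ii) verifying that $E$ is genuinely the top superlevel set of $\phi$ — this is exactly where both inequalities $t<\sqrt\lambda$ and $t+A/B>\sqrt\lambda$ enter, and it is what legitimises the bathtub comparison. The remainder is the standard two-line rearrangement computation together with an elementary integration.
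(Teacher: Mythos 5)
Your proposal is correct and follows essentially the same route as the paper: both arguments rest on the bathtub principle, with the defining equation for $t$ read as $\phi(t)=\phi(t+A/B)=:c$ so that $(t,t+A/B)$ is the top superlevel set of $\phi$, and the optimal competitor is $B\mathbf{1}_{(t,t+A/B)}$. The only cosmetic difference is in the final comparison — you split $\int\phi(g-B\mathbf{1}_E)$ as $\int(\phi-c)h+c\int h$ with pointwise sign checks (which also absorbs the case $\int g<A$ via $c>0$), whereas the paper first normalizes the mass of $g$ to $A$, restricts supports, and then compares the supremum of $\phi$ outside $[t_0,t_0+A/B]$ with its infimum inside; these are the same estimate.
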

\begin{proof}
Note that the function
$$\frac{\lambda r^n}{n} -\frac{r^{n+2}}{n+2}$$
tends to zero when $r\to 0$, and is minus infinite when $r=\infty$.
Moreover, there is only one maximum in $(0,\infty)$, which attains at the point
$$r=\sqrt\lambda,$$
and the function is negative if
$$\lambda< \frac{n}{n+2}r^2\Longleftrightarrow r>\sqrt{\frac{n+2}{n}\lambda}.$$
So for any $g$ satisfies the assumption,
$$\int_0^\infty \left(\frac{\lambda r^n}{n} -\frac{r^{n+2}}{n+2}\right)g(r)\,dr\le \int_0^{\sqrt{\frac{n+2}{n}\lambda}} \left(\frac{\lambda r^n}{n} -\frac{r^{n+2}}{n+2}\right)g(r)\,dr.$$
Therefore we may and do assume that $\supp g\subset (0,\sqrt{\frac{n+2}{n}\lambda})$.

Further, note that $\sqrt{\frac{n+2}{n}\lambda}-A/B>0$.
If $\int_0^{\sqrt{\frac{n+2}{n}\lambda}}g\,dr<A$, we can find another $\tilde g$ such that
$B\ge \tilde g\ge g$ and $\int_0^{\sqrt{\frac{n+2}{n}\lambda}}\tilde g\,dr=A$. It is obvious that
$$\int_0^{\sqrt{\frac{n+2}{n}\lambda}} \left(\frac{\lambda r^n}{n} -\frac{r^{n+2}}{n+2}\right)g(r)\,dr\le \int_0^{\sqrt{\frac{n+2}{n}\lambda}} \left(\frac{\lambda r^n}{n} -\frac{r^{n+2}}{n+2}\right)\tilde g(r)\,dr.$$

For $t\in (0,\sqrt{\frac{n+2}{n}\lambda}-A/B )$, let us set
$$g_t(r)=
\begin{cases}
B, & r\in (t,t+A/B)\\
0, & \mbox{otherwise}.
\end{cases}
$$
Then
\begin{align*}
\int_0^{\sqrt{\frac{n+2}{n}\lambda}} \left(\frac{\lambda r^n}{n} -\frac{r^{n+2}}{n+2}\right)g_t(r)\,dr&=B\left(\frac{\lambda r^{n+1}}{n(n+1)}-\frac{r^{n+3}}{(n+2)(n+3)}\right)\bigg|_{t}^{t+A/B}\\
&=B\left(\frac{\lambda \left[(t+A/B)^{n+1}-t^{n+1}\right]}{n(n+1)}-\frac{\left[(t+A/B)^{n+3}-t^{n+3}\right]}{(n+2)(n+3)}\right)\\
&=:B G(t).
\end{align*}
The last term $G(t)$ as a function of $t$, converges to $-\infty$ as $t\to\infty$, and when $t=0$, it holds that
\begin{align*}
\left(\frac{\lambda \left[(A/B)^{n+1}\right]}{n(n+1)}-\frac{\left[(A/B)^{n+3}\right]}{(n+2)(n+3)}\right)& =(A/B)^{n+1} \left(\frac{\lambda }{n(n+1)}-\frac{\left[(A/B)^{2}\right]}{(n+2)(n+3)}\right)\\
&\ge (A/B)^{n+1} \left(\frac{\lambda }{n(n+1)}-\frac{\lambda}{n(n+3)}\right)\\
&= (A/B)^{n+1} \frac{2\lambda }{n(n+1)(n+3)}.
\end{align*}
The maximal point of the $G(t)$ is attained at the point $t_0$ such that
\begin{align*}
\frac{\lambda \left[(t_0+A/B)^{n}-t_0^{n}\right]}{n}=\frac{\left[(t_0+A/B)^{n+2}-t_0^{n+2}\right]}{(n+2)}.
\end{align*}
Equivalently,
\begin{align*}
\frac{\lambda(t_0+A/B)^{n}}{n}-\frac{\lambda(t_0+A/B)^{n+2}}{n+2} =\frac{\lambda t_0^{n}}{n}-\frac{t_0^{n+2}}{(n+2)}.
\end{align*}
Since $\frac{\lambda r^n}{n} -\frac{r^{n+2}}{n+2}$ is increasing on $(0,\sqrt\lambda)$ and decreasing on $(\sqrt\lambda,\infty)$, we see that
$t_0<\sqrt\lambda$ and $t_0+A/B>\sqrt{\lambda}$.

For any other $\supp g\subset (0,\sqrt{\frac{n+2}{n}\lambda})$ satisfying $\int_0^{\sqrt{\frac{n+2}{n}\lambda}} g\,dr=A$ and the assumptions,
suppose that $g$ is not fully supported in $[t_0,t_0+A/B]$, i.e.,
$$\int_{t_0}^{t_0+A/B} g(t)\,dt<A.$$
Then
\begin{align*}
\int_{(0,\sqrt{\frac{n+2}{n}\lambda})\setminus [t_0,t_0+A/B]} \left(\frac{\lambda r^n}{n} -\frac{r^{n+2}}{n+2}\right)g(r)\,dr&\le
\sup_{t\in (0,\sqrt{\frac{n+2}{n}\lambda})\setminus [t_0,t_0+A/B]}\left(\frac{\lambda r^n}{n} -\frac{r^{n+2}}{n+2}\right) \left(A-\int_{t_0}^{t_0+A/B} g(t)\,dt\right) \\
&=\inf_{t\in [t_0,t_0+A/B]}\left(\frac{\lambda r^n}{n} -\frac{r^{n+2}}{n+2}\right) \left(\int_{t_0}^{t_0+A/B} \left(g_{t_0}(t)-g(t)\right)\,dt\right)\\
&\le \int_{t_0}^{t_0+A/B} \left(\frac{\lambda r^n}{n} -\frac{r^{n+2}}{n+2}\right)\left(g_{t_0}(t)-g(t)\right)\,dt.
\end{align*}
The proof is complete.
\end{proof}

\begin{thm}\label{Dirichlet-1}
Let $\Omega$ be a bounded open set in $\rn$, $n\ge 2$. Then it holds for $\lambda>0$ that
\begin{align*}
\sum_{k:\,\lambda_k<\lambda}(\lambda-\lambda_k)
\begin{cases}
=0, & \lambda\le \frac{(2\pi)^2}{|\Omega|^{2/n}\omega(n)^{2/n}},\\
\le   \frac{2n\omega(n)}{(2\pi)^{n}} |\Omega|^{1/2}|I(\Omega)|^{1/2}   \left[\frac{\lambda ((t_0+J_1)^{n+1}-t_0^{n+1})}{n(n+1)}-\frac{(t_0+J_1)^{n+3}-t_0^{n+3}}{(n+2)(n+3)}\right], & \lambda> \frac{(2\pi)^2}{|\Omega|^{2/n}\omega(n)^{2/n}},
\end{cases}
\end{align*}
where  $t_0<\sqrt\lambda$  is such that $t_0+J_1>\sqrt{\lambda}$ and
\begin{align*}
\frac{\lambda \left[(t_0+J_1)^{n}-t_0^{n}\right]}{n}=\frac{\left[(t_0+J_1)^{n+2}-t_0^{n+2}\right]}{(n+2)}.
\end{align*}
\end{thm}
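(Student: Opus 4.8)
The plan is to assemble the three lemmas already proved into a single chain of inequalities. First I would recall the standard spectral identity: for the Dirichlet Laplacian one has
\begin{align*}
\sum_{k:\,\lambda_k<\lambda}(\lambda-\lambda_k)=\int_{\rn}(\lambda-|\xi|^2)F_\lambda(\xi)\,d\xi,
\end{align*}
which follows from Parseval applied to the expansion of $(2\pi)^{-n/2}e^{ix\cdot\xi}\mathbf{1}_\Omega$ in the basis $\{\phi_k\}$ together with the fact that $\|\nabla\phi_k\|_2^2=\lambda_k$. With this identity in hand, Lemma \ref{lem-2} bounds the right-hand side by $\int_0^\infty n\omega(n)\big(\frac{\lambda r^n}{n}-\frac{r^{n+2}}{n+2}\big)(-\psi'(r))\,dr$, where $\psi(r)=F_\lambda^\ast(x)$, $|x|=r$. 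Thus we are reduced to estimating this last integral, i.e.\ to applying Lemma \ref{lem-3} with $g(r)=-\psi'(r)$.

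To invoke Lemma \ref{lem-3} I must verify its two hypotheses for $g=-\psi'$. Since $\psi$ is non-increasing with $\psi(0)=\sup_\xi F_\lambda(\xi)\le(2\pi)^{-n}|\Omega|$ and $\psi(r)\to 0$, we get $\int_0^\infty(-\psi'(r))\,dr=\psi(0)\le(2\pi)^{-n}|\Omega|=:A$. For the sup bound, Lemma \ref{lem-1} (the $p=\infty$ case) gives $\|\psi'\|_\infty=\|\nabla F_\lambda^\ast\|_\infty\le\|\nabla F_\lambda\|_\infty$, and \eqref{gradient-f} gives $\|\nabla F_\lambda\|_\infty\le 2(2\pi)^{-n}|\Omega|^{1/2}|I(\Omega)|^{1/2}=:B$. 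Then $A/B=\frac{|\Omega|^{1/2}}{2|I(\Omega)|^{1/2}}=J_1$ by the definition \eqref{J-def}. I should check the admissibility condition $A/B<\sqrt{\frac{n+2}{n}\lambda}$: using \eqref{J-diameter}, $J_1\le\sqrt{\frac{n+2}{n}}\frac{1}{2R_\Omega}$, while $\lambda>\frac{(2\pi)^2}{|\Omega|^{2/n}\omega(n)^{2/n}}$ is exactly $\sqrt\lambda>\frac{2\pi}{|\Omega|^{1/n}\omega(n)^{1/n}}=\frac{2\pi}{R_\Omega|\omega(n)^{1/n}|}\cdot\frac{1}{(2\pi)^0}$... concretely $\sqrt\lambda R_\Omega > 2\pi/\omega(n)^{1/n}$, which comfortably exceeds $\frac12\sqrt{(n+2)/n}$; so the hypothesis holds precisely in the regime $\lambda>\frac{(2\pi)^2}{|\Omega|^{2/n}\omega(n)^{2/n}}$, matching the case split in the statement. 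In the complementary regime $\lambda\le\frac{(2\pi)^2}{|\Omega|^{2/n}\omega(n)^{2/n}}$ one has $\lambda\le\lambda_1$ by the Faber--Krahn inequality (since $\lambda_1\ge\lambda_1(B_{R_\Omega})=c_n R_\Omega^{-2}$ and $c_n=j_{n/2-1,1}^2>\big(2\pi/\omega(n)^{1/n}\big)^2$... actually the cleaner route is: the Li--Yau bound \eqref{ly-eigenvalue} gives $\lambda_1\ge\frac{n}{n+2}\frac{(2\pi)^2}{|\Omega|^{2/n}\omega(n)^{2/n}}$, but for the sharp threshold one uses Faber--Krahn), hence the sum is empty and equals $0$.

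Finally, substituting $A=(2\pi)^{-n}|\Omega|$, $B=2(2\pi)^{-n}|\Omega|^{1/2}|I(\Omega)|^{1/2}$, $A/B=J_1$ into the conclusion of Lemma \ref{lem-3} yields
\begin{align*}
\int_0^\infty n\omega(n)\Big(\tfrac{\lambda r^n}{n}-\tfrac{r^{n+2}}{n+2}\Big)(-\psi'(r))\,dr
\le n\omega(n)\,B\Big(\tfrac{\lambda[(t_0+J_1)^{n+1}-t_0^{n+1}]}{n(n+1)}-\tfrac{(t_0+J_1)^{n+3}-t_0^{n+3}}{(n+2)(n+3)}\Big),
\end{align*}
and $n\omega(n)B=\frac{2n\omega(n)}{(2\pi)^n}|\Omega|^{1/2}|I(\Omega)|^{1/2}$, which is exactly the claimed bound; the critical point $t_0$ of Lemma \ref{lem-3} with $A/B=J_1$ is characterised by the stated equation, and $t_0<\sqrt\lambda<t_0+J_1$ as noted there. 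I expect the main obstacle to be purely bookkeeping: making sure the admissibility inequality $J_1<\sqrt{\frac{n+2}{n}\lambda}$ is genuinely equivalent to (or implied by) the threshold $\lambda>\frac{(2\pi)^2}{|\Omega|^{2/n}\omega(n)^{2/n}}$ and dovetailing this with the Faber--Krahn vanishing in the other case, so that the two branches of the conclusion meet exactly at the threshold value; everything else is a direct concatenation of Lemmas \ref{lem-1}, \ref{lem-2}, \ref{lem-3}.
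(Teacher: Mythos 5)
Your proposal is correct and follows essentially the same route as the paper: the spectral identity plus Lemma \ref{lem-2} reduce the Riesz mean to $\int_0^\infty n\omega(n)\big(\tfrac{\lambda r^n}{n}-\tfrac{r^{n+2}}{n+2}\big)(-\psi'(r))\,dr$, the bounds $A=(2\pi)^{-n}|\Omega|$ and $B=2(2\pi)^{-n}|\Omega|^{1/2}|I(\Omega)|^{1/2}$ come from Lemma \ref{lem-1} with \eqref{gradient-f}, the admissibility $A/B=J_1<\sqrt{\tfrac{n+2}{n}\lambda}$ is checked via \eqref{J-compare} and \eqref{first-eigenvalue} exactly as in the paper's display \eqref{first-j1}, and Lemma \ref{lem-3} gives the stated bound (the vanishing branch for $\lambda\le\lambda_1$ being handled the same way). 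The only blemishes are cosmetic (a stray $\omega(n)^{1/n}$ where $\omega(n)^{2/n}$ is meant), which do not affect the argument.
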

\begin{proof}
By the definition of $F_\lambda$ and Lemma \ref{lem-2}, we have
\begin{align}\label{est-1}
\sum_{k:\,\lambda_k<\lambda}(\lambda-\lambda_k)&= \sum_{k:\,\lambda_k<\lambda}\int_{\Omega} (\lambda-\lambda_k)|\phi_k(x)|^2\,dx \nonumber\\
&= \sum_{k:\,\lambda_k<\lambda}\int_{\rn} (\lambda-|\xi|^2)|\hat{\phi}_k(\xi)|^2\,d\xi \nonumber\\
&= \int_{\rn} (\lambda-|\xi|^2) F_\lambda(\xi)\,d\xi\nonumber\\
&\le \int_0^\infty n\omega(n)\left(\frac{\lambda r^n}{n} -\frac{r^{n+2}}{n+2}\right) (-\psi'(r))\,dr,
\end{align}
where $\psi(r)=\psi(|x|)=F_\lambda^\ast(x)$.

By Lemma \ref{lem-1} and \eqref{gradient-f}, we have that
$$|\psi'(|x|)|=|\nabla F_\lambda^\ast(x)|\le \|\nabla F_\lambda\|_{L^\infty(\rn)}\le 2(2\pi)^{-n}|\Omega|^{1/2}|I(\Omega)|^{1/2}.$$

On the other hand, we have
$$0\le \psi(r)\le (2\pi)^{-n} |\Omega|.$$

Now set $A=(2\pi)^{-n} |\Omega|$ and $B=2(2\pi)^{-n}|\Omega|^{1/2}|I(\Omega)|^{1/2}$ in Lemma \ref{lem-3}.
Recall from \eqref{hl-inequality} that
\begin{align*}
I(\Omega)\ge \int_{B(0,R_\Omega)}|x|^2\,dx=\frac{n}{n+2}R_\Omega^2|\Omega|,
\end{align*}
and we further have
\begin{align}\label{J-compare}
A/B=\frac{(2\pi)^{-n} |\Omega|}{2(2\pi)^{-n}|\Omega|^{1/2}|I(\Omega)|^{1/2}}=\frac{|\Omega|^{1/2}}{2|I(\Omega)|^{1/2}}=J_1\le \frac{\sqrt{n+2}}{2R_\Omega\sqrt n}.
\end{align}
where $J_1$ is defined by \eqref{J-def}, $R_\Omega>0$ is such that $|B(0,R_\Omega)|=|\Omega|.$

On the other hand, note that for the first eigenvalue we always have
\begin{equation}\label{first-eigenvalue}
\lambda_1 \ge  \frac{(2\pi)^2}{|\Omega|^{2/n}\omega(n)^{2/n}}=\frac{(2\pi)^2}{R_\Omega^2\omega(n)^{4/n}}.
\end{equation}
Therefore for $\lambda>\frac{(2\pi)^2}{|\Omega|^{2/n}\omega(n)^{2/n}}$, it holds that
\begin{align}\label{first-j1}
\sqrt{\frac{n+2}{n}\lambda}& >\sqrt{\frac{n+2}{n}\lambda_1} \ge \sqrt{\frac{n+2}{n}} (2\pi)  |\omega(n)|^{-2/n} R_\Omega^{-1}\nonumber\\
&\ge 4(\Gamma(1+\frac n2))^{2/n}J_1=4(\Gamma(1+\frac n2))^{2/n} A/B,
\end{align}
since
$$\omega(n)=\frac{\pi^{n/2}}{\Gamma(1+\frac n2)}.$$

We therefore deduce from Lemma \ref{lem-3} that
\begin{align}\label{est-2}
\sum_{k:\,\lambda_k<\lambda}(\lambda-\lambda_k)&\le \int_0^\infty n\omega(n)\left(\frac{\lambda r^n}{n} -\frac{r^{n+2}}{n+2}\right) (-\psi'(r))\,dr\nonumber\\
&\le  \frac{2n\omega(n)}{(2\pi)^{n}} |\Omega|^{1/2}|I(\Omega)|^{1/2}    \left[\frac{\lambda ((t_0+J_1)^{n+1}-t_0^{n+1})}{n(n+1)}-\frac{(t_0+J_1)^{n+3}-t_0^{n+3}}{(n+2)(n+3)}\right],
\end{align}
where  $t_0<\sqrt\lambda$ is such that  $t_0+A/B>\sqrt{\lambda}$ and
\begin{align*}
\frac{\lambda \left[(t_0+J_1)^{n}-t_0^{n}\right]}{n}=\frac{\left[(t_0+J_1)^{n+2}-t_0^{n+2}\right]}{(n+2)},
\end{align*}
the proof is complete.
\end{proof}
\begin{rem}\label{remark-improvement}\rm
Previous Berezin-Li-Yau inequality
\begin{align*}
\sum_{k:\,\lambda_k<\lambda}(\lambda-\lambda_k)&\le  \frac{2}{n+2} \frac{\omega(n)|\Omega|}{(2\pi)^{n}}\lambda^{1+n/2}
\end{align*}
can simply be obtained  by letting $(-\psi'(r))$ in \eqref{est-2} be multiples of the Dirac function at the point $r=\sqrt\lambda$, i.e.,
$$(-\psi'(r))=\frac{|\Omega|}{(2\pi)^n}\delta_{\sqrt\lambda}(r).$$
Recall that $r=\sqrt\lambda$ is the {\em only} maximal point of the function
$$\frac{\lambda r^n}{n} -\frac{r^{n+2}}{n+2}.$$
For our result, the realization function is in the form
$$
-\psi'(r)=\begin{cases}
2(2\pi)^{-n}|\Omega|^{1/2}|I(\Omega)|^{1/2},&  r\in [t_0,t_0+J_1]\\
0, & otherwise.
\end{cases}$$
Note that
$$\int_{t_0}^{t_0+J_1} (-\psi'(r))\,dr=\frac{|\Omega|}{(2\pi)^n}.$$
From this one sees that Theorem \ref{Dirichlet-1} improves
the original Berezin-Li-Yau inequality in an obvious way.
\end{rem}

We turn to prove Theorem \ref{Dirichlet-2}.
\begin{proof}[Proof of Theorem \ref{Dirichlet-2}]
We only need to consider the case $\lambda>\frac{(2\pi)^2}{|\Omega|^{2/n}\omega(n)^{2/n}}$.

If $n=2$,
$$\frac{\lambda (n+2)}{n} [(t_0+J_1)^{n}-t_0^{n}] = (t_0+J_1)^{n+2}-t_0^{n+2}$$
gives that
$$2\lambda= (t_0+J_1)^{2}+t_0^{2},$$
and
$$t_0=\sqrt{\lambda-\frac{J_1^2}{4}}-\frac{J_1}{2}.$$
So
\begin{align*}
 &\frac{\lambda ((t_0+J_1)^{n+1}-t_0^{n+1})}{n(n+1)}-\frac{(t_0+J_1)^{n+3}-t_0^{n+3}}{(n+2)(n+3)}\nonumber\\
  &=\frac{2\lambda \left[(\sqrt{\lambda-\frac{J_1^2}{4}}+\frac{J_1}{2})^{3}-(\sqrt{\lambda-\frac{J_1^2}{4}}-\frac{J_1}{2})^{3} \right] }{30}+\frac{\lambda J_1 (\sqrt{\lambda-\frac{J_1^2}{4}}-\frac{J_1}{2})^{2} }{{10}}-\frac{J_1  (\sqrt{\lambda-\frac{J_1^2}{4}}-\frac{J_1}{2})^{4}}{20}\\
  &=\frac{ 2\lambda J_1 \left[ 2\lambda+\lambda -\frac{J_1^2}{4}-\frac{J_1^2}{4}\right]  }{30}+\frac{J_1 (\sqrt{\lambda-\frac{J_1^2}{4}}-\frac{J_1}{2})^{2}  (2\lambda- \lambda+ J_1\sqrt{\lambda-\frac{J_1^2}{4}})}{20}\\
  &= \frac{ 2\lambda J_1 \left[ 3\lambda -\frac{J_1^2}{2}\right]  }{30}+\frac{J_1 ( \lambda^2-J_1^2 (\lambda-\frac{J_1^2}{4}))}{20}\\
  &=\frac{\lambda^2 J_1}{4}-\frac{J_1^3\lambda }{12}+\frac{J_1^5}{80}.
\end{align*}
This gives that
\begin{align}\label{est-21}
\sum_{k:\,\lambda_k<\lambda}(\lambda-\lambda_k)&\le   \frac{2n\omega(n)}{(2\pi)^{n}} |\Omega|^{1/2}|I(\Omega)|^{1/2}    \left[\frac{\lambda^2 J_1}{4}-\frac{J_1^3\lambda }{12}+\frac{J_1^5}{80}\right]\nonumber\\
&=\frac{\omega(n) |\Omega|}{(2\pi)^{n}} \frac{\lambda^2 }{2} \left[1- \frac{J_1^2 }{3\lambda}+\frac{J_1^4 }{20 \lambda^2}\right].
\end{align}
By \eqref{J-compare} and \eqref{first-eigenvalue}, we see that
\begin{align*}
\sum_{k:\,\lambda_k<\lambda}(\lambda-\lambda_k)&\le \frac{\omega(n) |\Omega|}{(2\pi)^{n}} \frac{\lambda^2 }{2} \left[1- \frac{J_1^2 }{3\lambda}+\frac{J_1^4 }{20 \lambda^2}\right]\le \frac{\omega(n) |\Omega|}{(2\pi)^{n}} \frac{\lambda^2 }{2} \left[1- \frac{J_1^2 }{3\lambda}+\frac{J_1^2\lambda_1 }{160 \lambda^2}\right]\\
&\le \frac{\omega(n) |\Omega|}{(2\pi)^{n}} \frac{\lambda^2 }{2} \left[1- \frac{157 J_1^2  }{480 \lambda}\right]
\end{align*}
for $\lambda>\lambda_1$, this proves Theorem \ref{Dirichlet-2} for the case $n=2$.

Let us consider the case $n\ge 3$.
%

Recall that
$$0\le -\psi'(r)\le 2(2\pi)^{-n}|\Omega|^{1/2}|I(\Omega)|^{1/2}.$$
The maximum of the integral
$$-\int_0^\infty n\omega(n)\left(\frac{\lambda r^n}{n} -\frac{r^{n+2}}{n+2}\right) \psi'(r)\,dr$$
attains for
$$
-\psi'(r)=\begin{cases}
2(2\pi)^{-n}|\Omega|^{1/2}|I(\Omega)|^{1/2},&  r\in [t_0,t_0+J_1]\\
0, & otherwise.
\end{cases}$$
So for $\psi(r)$, we have
$$
\psi(r)=\begin{cases}
(2\pi)^{-n}|\Omega|,& r\le t_0,\\
(2\pi)^{-n}|\Omega|-2(2\pi)^{-n}|\Omega|^{1/2}|I(\Omega)|^{1/2}(r-t_0),&  r\in [t_0,t_0+J_1]\\
0, & otherwise.
\end{cases}$$

By Theorem \ref{Dirichlet-1}, we have
\begin{align*}
\sum_{k:\,\lambda_k<\lambda}(\lambda-\lambda_k)&\le  \int_0^{t_0+J_1} n\omega(n)(\lambda-|r|^2)(2\pi)^{-n}|\Omega| r^{n-1} \,dr\nonumber\\
 &\quad-  \int_{t_0}^{t_0+J_1} n\omega(n)(\lambda-|r|^2)\left[2(2\pi)^{-n}|\Omega|^{1/2}|I(\Omega)|^{1/2}(r-t_0)\right] r^{n-1} \,dr.
\end{align*}

On the other hand,
recall that the function
$$\theta(r):=\frac{\lambda r^n}{n} -\frac{r^{n+2}}{n+2}$$
has  one maximum in $(0,\infty)$, which attains at the point
$r=\sqrt\lambda.$
Since $t_0$ satisfies $t_0<\sqrt\lambda$, $t_0+J_1>\sqrt\lambda$,
$$\frac{\lambda (n+2)}{n} [(t_0+J_1)^{n}-t_0^{n}] = (t_0+J_1)^{n+2}-t_0^{n+2},$$
equivalently,
$$\frac{\lambda t_0^n}{n} -\frac{t_0^{n+2}}{n+2}=\frac{\lambda (t_0+J_1)^n}{n} -\frac{(t_0+J_1)^{n+2}}{n+2}. $$
Note that
$$\theta'(r)=\lambda r^{n-1}-r^{n+1},$$
so around $\sqrt\lambda$, for $0<\delta<\sqrt\lambda$,
\begin{align*}
0>\frac{\theta'(\sqrt\lambda-\delta)}{\theta'(\sqrt\lambda+\delta)}=\frac{\lambda-(\sqrt\lambda-\delta)^2}{\lambda-(\sqrt\lambda+\delta)^2 }\frac{(\sqrt\lambda-\delta)^{n-1}}{(\sqrt\lambda+\delta)^{n-1}}=
-\frac{2\sqrt\lambda \delta-\delta^2}{2\sqrt\lambda \delta+\delta^2}\frac{(\sqrt\lambda-\delta)^{n-1}}{(\sqrt\lambda+\delta)^{n-1}}>-1.
\end{align*}
This implies that
$$t_0=\sqrt\lambda-\alpha J_1<\sqrt\lambda-\frac {J_1}{2},$$
i.e.,
$$\alpha>\frac 12.$$

From this  we further deduce that
\begin{align*}
\sum_{k:\,\lambda_k<\lambda}(\lambda-\lambda_k)&\le  \int_0^{\sqrt\lambda} n\omega(n)(\lambda-|r|^2)(2\pi)^{-n}|\Omega| r^{n-1} \,dr\nonumber\\
 &\quad-  \int_{\sqrt\lambda-J_1/2}^{\sqrt\lambda} n\omega(n)(\lambda-|r|^2)\left[2(2\pi)^{-n}|\Omega|^{1/2}|I(\Omega)|^{1/2}(r-\sqrt\lambda+J_1/2)\right] r^{n-1} \,dr\\
 &\le \frac{ 2\omega(n)}{(n+2)} \frac{|\Omega|}{(2\pi)^{n}}\lambda^{1+n/2}\\
 &\quad-\frac{2 n\omega(n)|\Omega|^{1/2}|I(\Omega)|^{1/2}}{(2\pi)^{n}}(\sqrt\lambda-\frac{J_1}2)^{n-1}(2\sqrt\lambda-\frac{J_1}{2})
 \int_{\sqrt\lambda-J_1/2}^{\sqrt\lambda} (\sqrt\lambda-r)(r-\sqrt\lambda +\frac{J_1}2)\,dr,
\end{align*}
where
\begin{align*}
\int_{\sqrt\lambda-J_1/2}^{\sqrt\lambda} (\sqrt\lambda-r)(r-\sqrt\lambda +\frac{J_1}2)\,dr&
= \left[\frac{1}{3} (\sqrt\lambda-r)^3-\frac{J_1}4  (\sqrt\lambda-r)^2\right]\bigg|_{\sqrt\lambda-J_1/2}^{\sqrt\lambda}
=\frac{5}{48}J_1^3.
\end{align*}
Thus we conclude that
\begin{align*}
\sum_{k:\,\lambda_k<\lambda}(\lambda-\lambda_k)&\le \frac{ 2\omega(n)}{(n+2)} \frac{|\Omega|}{(2\pi)^{n}}\lambda^{1+n/2}\\
 &\quad-\frac{2 n\omega(n)|\Omega|^{1/2}|I(\Omega)|^{1/2}}{(2\pi)^{n}}(\sqrt\lambda-\frac{J_1}2)^{n-1}(2\sqrt\lambda-\frac{J_1}{2})
 \frac{5}{48}J_1^3\\
 &=  \frac{ 2\omega(n)}{n+2} \frac{|\Omega|}{(2\pi)^{n}}\lambda^{1+n/2}-\frac{ 5n\omega(n)|\Omega|}{48(2\pi)^{n}}(\sqrt\lambda-\frac{J_1}2)^{n-1}(2\sqrt\lambda-\frac{J_1}{2})J_1^2
\end{align*}

By \eqref{J-compare} and \eqref{first-eigenvalue},
\begin{align*}
J_1\le \frac{\sqrt{n+2}}{2R_\Omega\sqrt n}
\end{align*}
and
\begin{equation*}
\lambda_1 \ge  \frac{(2\pi)^2}{|\Omega|^{2/n}\omega(n)^{2/n}}=\frac{(2\pi)^2}{R_\Omega^2\omega(n)^{4/n}}= \frac{4 (\Gamma(1+\frac n2))^{4/n}}{R_\Omega^2},
\end{equation*}
we then have for $\lambda>\lambda_1$,
\begin{align*}
(\sqrt\lambda-\frac{J_1}2)^{n-1}\ge \lambda^{\frac{n-1}{2}}\left(1-\frac{\sqrt{n+2}}{8\sqrt n \Gamma(1+\frac n2))^{2/n}}\right)^{n-1},
\end{align*}
and
$$2\sqrt\lambda-\frac{J_1}{2}\ge \sqrt\lambda\left(2-\frac{\sqrt{n+2}}{8\sqrt n \Gamma(1+\frac n2))^{2/n}}\right).$$
Hence, we finally deduce that
\begin{align*}
\sum_{k:\,\lambda_k<\lambda}(\lambda-\lambda_k)&\le   \frac{ 2\omega(n)}{n+2} \frac{|\Omega|}{(2\pi)^{n}}\lambda^{1+n/2}\left(1-C_1(n)J_1^2\lambda^{-1}\right),
\end{align*}
where
$$C_1(n)=\frac{5n(n+2)}{96}\left(1-\frac{\sqrt{n+2}}{8\sqrt n \Gamma(1+\frac n2))^{2/n}}\right)^{n-1}\left(2-\frac{\sqrt{n+2}}{8\sqrt n \Gamma(1+\frac n2))^{2/n}}\right).$$
The proof for $n\ge 3$ is complete.
\end{proof}
\begin{rem}\label{remark-1}\rm
By  Stirling's formula
$$\Gamma(1+x)\ge \sqrt{2\pi x}(x/e)^x,$$
$\Gamma(1+\frac n2))^{2/n}$ behaves as $\frac{n}{2e}$ along $n\to\infty$.
This implies that
$$\left(1-\frac{\sqrt{n+2}}{8\sqrt n \Gamma(1+\frac n2))^{2/n}}\right)^{n-1}\to e^{-e/4} \approx 0.507.$$
So we can find a uniformly positive lower bound  for $\left(1-\frac{\sqrt{n+2}}{8\sqrt n \Gamma(1+\frac n2))^{2/n}}\right)^{n-1}\left(2-\frac{\sqrt{n+2}}{8\sqrt n \Gamma(1+\frac n2))^{2/n}}\right)$
for all $n\ge 3$. In fact, a calculation shows that the quantity
$$\left(1-\frac{\sqrt{n+2}}{8\sqrt n \Gamma(1+\frac n2))^{2/n}}\right)^{n-1}\left(2-\frac{\sqrt{n+2}}{8\sqrt n \Gamma(1+\frac n2))^{2/n}}\right)\approx 1.402, 1.341,1.248, 1.045, 1.016, 1.014$$
for $n=3,4,5, 100, 2000, 100000$.
\end{rem}

By letting $\lambda=(1+2/n)\lambda_k$ and dividing both terms in \eqref{riesz-dirichlet}  we have the following improvement
on eigenvalues from \cite{LY83}, see also \cite{La97}.
\begin{cor}\label{eigenvalue-dirichlet-1}
Let $\Omega$ be a bounded open set in $\rn$, $n\ge 2$. Then there exists $C_1(n)>0$ such that it holds
for all $k\ge 1$ that
\begin{align}\label{riesz-dirichlet-1}
k&\le \left(1+\frac 2n\right)^{n/2}\lambda_k^{\frac n2} \frac{|\Omega| |\omega(n)|}{(2\pi)^{n}} \left(1 -\frac{C_1(n)n}{4(n+2)\lambda_k} \frac{ |\Omega|}{|I(\Omega)|}\right).
\end{align}
\end{cor}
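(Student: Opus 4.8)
The plan is to deduce \eqref{riesz-dirichlet-1} from the Riesz-mean estimate \eqref{riesz-dirichlet} of Theorem~\ref{Dirichlet-2} by the standard device, going back to Li--Yau and Laptev, of evaluating the Riesz mean at the single well-chosen value $\lambda=\bigl(1+\tfrac2n\bigr)\lambda_k$. So I fix $k\ge1$, set $\lambda=\bigl(1+\tfrac2n\bigr)\lambda_k$, and record that $\lambda-\lambda_k=\tfrac2n\lambda_k>0$.

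First I would bound the left-hand side of \eqref{riesz-dirichlet} from below. Since $\lambda>\lambda_k\ge\lambda_j$ for every $1\le j\le k$, all of $\lambda_1,\dots,\lambda_k$ are counted in the sum, and $\lambda-\lambda_j\ge\lambda-\lambda_k$ for each such $j$; hence
$$
\sum_{j:\,\lambda_j<\lambda}(\lambda-\lambda_j)\ \ge\ \sum_{j=1}^{k}(\lambda-\lambda_j)\ \ge\ k(\lambda-\lambda_k)\ =\ \tfrac2n\,k\,\lambda_k .
$$
For the right-hand side I would use the elementary identities $\bigl(1+\tfrac2n\bigr)^{1+\frac n2}=\tfrac{n+2}{n}\bigl(1+\tfrac2n\bigr)^{\frac n2}$ and $\lambda^{-1}=\tfrac{n}{n+2}\lambda_k^{-1}$ to rewrite
$$
\frac{2}{n+2}\,\lambda^{1+\frac n2}\,\frac{|\Omega|\,|\omega(n)|}{(2\pi)^{n}}\Bigl(1-C_1(n)\lambda^{-1}|J_1|^2\Bigr)_{+}
=\frac2n\Bigl(1+\tfrac2n\Bigr)^{\frac n2}\lambda_k^{1+\frac n2}\,\frac{|\Omega|\,|\omega(n)|}{(2\pi)^{n}}\Bigl(1-\tfrac{n\,C_1(n)}{(n+2)\lambda_k}|J_1|^2\Bigr)_{+}.
$$
Combining the two bounds, dividing through by $\tfrac2n\lambda_k>0$, and substituting $|J_1|^2=\tfrac{|\Omega|}{4|I(\Omega)|}$ from \eqref{J-def} then produces exactly \eqref{riesz-dirichlet-1}, except that the correction factor still carries the positive-part subscript.

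It remains only to dispose of the $(\cdot)_{+}$. Since $k\ge1>0$, the inequality forces the right-hand side to be strictly positive, so the positive part must equal the bracketed expression itself and the subscript may simply be dropped; if a self-contained justification is preferred, one instead checks directly that the correction term is strictly less than $1$, using $\lambda_k\ge\lambda_1\ge\tfrac{(2\pi)^2}{|\Omega|^{2/n}\omega(n)^{2/n}}$ from \eqref{first-eigenvalue} together with the bound $J_1\le\tfrac{\sqrt{n+2}}{2\sqrt n\,R_\Omega}$ from \eqref{J-compare}, which reduces the correction term to a purely dimensional quantity bounded away from $1$. I do not anticipate any genuine obstacle: the argument is the routine bookkeeping of this one-point evaluation, and the only step requiring a moment's thought is this last positivity remark.
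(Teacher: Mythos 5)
Your argument is correct and is precisely the route the paper takes: the paper's entire proof is the one-line remark preceding the corollary ("letting $\lambda=(1+2/n)\lambda_k$ and dividing both terms in \eqref{riesz-dirichlet}"), and you have simply written out that substitution, the lower bound $\sum_{j:\lambda_j<\lambda}(\lambda-\lambda_j)\ge k(\lambda-\lambda_k)=\tfrac2n k\lambda_k$, and the removal of the positive part, all of which check out. No issues.
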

Recall that for $n=2$,
$$C_1(2)=\frac{157}{480},$$
\eqref{riesz-dirichlet-1} is equivalent to
\begin{align*}
\lambda_k \ge  \frac{(2\pi)^{2}}{2|\Omega| |\omega(2)|}  k+ \frac{C_1(2)}{8} \frac{ |\Omega|}{|I(\Omega)|}.
\end{align*}
Obviously, $\frac{C_1(2)}{8}$ is close to $1/24$ and is larger than $1/32$ from \eqref{melas-sum} (cf. \cite{CQW13,{KVW09}}).

By Theorem \ref{Dirichlet-2}, we derive the following qualitative results.
\begin{thm}\label{infinite-dirichlet-polya}
Let $\Omega$ be a bounded open set in $\rn$, $n\ge 2$.
For any $\lambda_{j}>\lambda_1$, let $\lambda_{j_1}$ be the smallest eigenvalue that satisfies
$$\lambda_{j_1}\ge \left(\frac{2\lambda_{j}^{1+\frac n2}}{C_1(n)|J_1|^2}\right)^{2/n}.$$
Then there exists an eigenvalue $\lambda_{k_j}\in [\lambda_{j}, \lambda_{j_1}]$, that satisfies
$$\frac{|\Omega| |\omega(n)|}{(2\pi)^{n}}  \lambda_{k_{j}}^{\frac n2}\ge k_{j}-1+\frac{|\Omega| |\omega(n)|}{(2\pi)^{n}} \frac{nC_1(n)|J_1|^2}{2(n+2)}\lambda_{k_{j}}^{\frac n2-1}. $$
Above $C_1(n)$ is given by \eqref{defn-c1}.
\end{thm}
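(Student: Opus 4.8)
The plan is to argue by contradiction, feeding the Riesz mean bound of Theorem~\ref{Dirichlet-2} (taken at the level $\lambda=\lambda_{j_1}$) into the layer-cake identity
\[
\sum_{k:\,\lambda_k<\lambda}(\lambda-\lambda_k)=\int_0^{\lambda}\mathcal N^D_\Omega(t)\,dt ,
\]
which holds because the left side equals $\sum_k(\lambda-\lambda_k)_+=\sum_k\int_0^\lambda\mathbf 1_{\{t>\lambda_k\}}\,dt$ and one interchanges sum and integral by Tonelli. Abbreviate $L=\frac{|\Omega||\omega(n)|}{(2\pi)^n}$ and $c_n=\frac{nC_1(n)|J_1|^2}{2(n+2)}$, so that the inequality to be established at an eigenvalue $\lambda_m$ (indexed with multiplicity) reads $L\lambda_m^{n/2}\ge m-1+Lc_n\lambda_m^{n/2-1}$. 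Suppose this fails at \emph{every} eigenvalue lying in $[\lambda_j,\lambda_{j_1}]$; the goal is then to show that $\sum_{k:\,\lambda_k<\lambda_{j_1}}(\lambda_{j_1}-\lambda_k)$ strictly exceeds what \eqref{riesz-dirichlet} permits, a contradiction.

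A few preliminary observations handle the bookkeeping. By \eqref{first-eigenvalue}, $\lambda_1\ge (2\pi)^2|\Omega|^{-2/n}\omega(n)^{-2/n}$, and by \eqref{J-diameter}, $|J_1|^2\le\frac{(n+2)\omega(n)^{2/n}}{4n|\Omega|^{2/n}}$; combining these and using the Stirling bound for $\Gamma(1+\tfrac n2)$ from Remark~\ref{remark-1} gives $\lambda_1>C_1(n)|J_1|^2$ for every $n\ge2$. Hence $\lambda_{j_1}\ge\lambda_1>C_1(n)|J_1|^2$, so the positive part in \eqref{riesz-dirichlet} is not triggered at $\lambda=\lambda_{j_1}$; also $s\mapsto s^{n/2}-c_ns^{n/2-1}$ is nondecreasing on $[\lambda_j,\infty)$, since its derivative $s^{n/2-2}\big(\tfrac n2 s-(\tfrac n2-1)c_n\big)$ is nonnegative for $s\ge\frac{(n-2)c_n}{n}$ and $\frac{(n-2)c_n}{n}<C_1(n)|J_1|^2<\lambda_1\le\lambda_j$. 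Finally, the definition of $\lambda_{j_1}$ gives $\lambda_{j_1}\ge\big(\tfrac{2\lambda_j^{1+n/2}}{C_1(n)|J_1|^2}\big)^{2/n}>\lambda_j$ (the last inequality being just $2\lambda_j>C_1(n)|J_1|^2$), so $\lambda_{j_1}$ exists, is an eigenvalue, and $[\lambda_j,\lambda_{j_1}]$ is nondegenerate.

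Now take any $t\in(\lambda_j,\lambda_{j_1})$ which is not an eigenvalue, and put $m=\mathcal N^D_\Omega(t)$, so $\lambda_m<t<\lambda_{m+1}$. Since $\lambda_{j_1}$ is an eigenvalue with $\lambda_{j_1}\ge t>\lambda_m$, the first eigenvalue exceeding $\lambda_m$ obeys $\lambda_j<t<\lambda_{m+1}\le\lambda_{j_1}$, so $\lambda_{m+1}\in[\lambda_j,\lambda_{j_1}]$; by hypothesis the inequality fails at $\lambda_{m+1}$ (index $m+1$), which gives $m>L\lambda_{m+1}^{n/2}-Lc_n\lambda_{m+1}^{n/2-1}\ge Lt^{n/2}-Lc_nt^{n/2-1}$, the last step using the monotonicity above together with $\lambda_{m+1}\ge t\ge\lambda_j$. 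Integrating over $(\lambda_j,\lambda_{j_1})$, discarding the nonnegative part of the integral over $(0,\lambda_j)$, and evaluating with $\frac{2c_n}{n}=\frac{C_1(n)|J_1|^2}{n+2}$,
\[
\sum_{k:\,\lambda_k<\lambda_{j_1}}(\lambda_{j_1}-\lambda_k)>\int_{\lambda_j}^{\lambda_{j_1}}\big(Lt^{n/2}-Lc_nt^{n/2-1}\big)\,dt=\frac{2L}{n+2}\lambda_{j_1}^{1+\frac n2}\Big(1-\frac{C_1(n)|J_1|^2}{\lambda_{j_1}}\Big)+S ,
\]
where $S=\frac{L}{n+2}\big(C_1(n)|J_1|^2\lambda_{j_1}^{n/2}-2\lambda_j^{1+\frac n2}+C_1(n)|J_1|^2\lambda_j^{n/2}\big)$. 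The defining inequality $\lambda_{j_1}^{n/2}\ge\frac{2\lambda_j^{1+n/2}}{C_1(n)|J_1|^2}$ forces $S\ge\frac{L}{n+2}C_1(n)|J_1|^2\lambda_j^{n/2}>0$, hence $\sum_{k:\,\lambda_k<\lambda_{j_1}}(\lambda_{j_1}-\lambda_k)>\frac{2L}{n+2}\lambda_{j_1}^{1+n/2}\big(1-C_1(n)\lambda_{j_1}^{-1}|J_1|^2\big)$, which is exactly the right-hand side of \eqref{riesz-dirichlet} at $\lambda=\lambda_{j_1}$ (the positive part being inactive there). This contradiction shows the inequality holds at some $\lambda_{k_j}\in[\lambda_j,\lambda_{j_1}]$, which is the assertion.

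The only genuinely delicate point is the calibration in the last paragraph: one must verify that replacing the step function $\mathcal N^D_\Omega$ by the smooth profile $Lt^{n/2}-Lc_nt^{n/2-1}$ loses nothing (this is precisely the monotonicity arranged in the second paragraph), and that after integration the surplus $S$ becomes nonnegative exactly when $\lambda_{j_1}$ reaches the threshold $\big(\tfrac{2\lambda_j^{1+n/2}}{C_1(n)|J_1|^2}\big)^{2/n}$ — it is this cancellation, matched against the constant $\frac{2}{n+2}C_1(n)|J_1|^2$ of Theorem~\ref{Dirichlet-2}, that dictates the choice of right endpoint in the statement. Everything else reduces to elementary index bookkeeping and the inequality $\lambda_1>C_1(n)|J_1|^2$.
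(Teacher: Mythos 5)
Your proof is correct, and at its core it runs on exactly the same mechanism as the paper's: Theorem~\ref{Dirichlet-2} evaluated at $\lambda_{j_1}$, the surplus $\frac{C_1(n)|J_1|^2}{n+2}\frac{|\Omega|\omega(n)}{(2\pi)^n}\lambda_{j_1}^{n/2}$ that the threshold defining $\lambda_{j_1}$ is calibrated to exceed $\frac{2}{n+2}\frac{|\Omega|\omega(n)}{(2\pi)^n}\lambda_j^{1+n/2}$, and the observation that the desired eigenvalue inequality is precisely the statement that the counting function does not exceed $L\bigl(t^{n/2}-c_nt^{n/2-1}\bigr)$. The packaging differs: the paper introduces the auxiliary function $F_D(\lambda)=\frac{2}{n+2}L\lambda^{1+n/2}\bigl(1-\frac{C_1(n)|J_1|^2}{2\lambda}\bigr)_+-\sum_{\lambda_k<\lambda}(\lambda-\lambda_k)$, shows $F_D(\lambda_{j_1})>F_D(\lambda_j)$, and then uses the monotonicity of $F_D'$ on each inter-eigenvalue interval to push the point where $F_D'>0$ to the right endpoint, which is an eigenvalue; you instead run the contrapositive, converting ``failure at every eigenvalue of $[\lambda_j,\lambda_{j_1}]$'' into the pointwise bound $\mathcal{N}^D_\Omega(t)>L\bigl(t^{n/2}-c_nt^{n/2-1}\bigr)$ via the monotonicity of the profile, and integrate through the layer-cake identity. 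The two localization devices (convexity of $F_D'$ versus monotonicity of $s\mapsto s^{n/2}-c_ns^{n/2-1}$) rest on the same elementary inequality $\lambda_j\ge\lambda_1>\frac{(n-2)c_n}{n}$; your version is arguably cleaner in that it never needs to argue about where inside an interval $F_D'$ changes sign. One point worth flagging: both arguments need $\lambda_1$ to dominate a constant multiple of $|J_1|^2$ that grows like $n^2$ through $C_1(n)$, and the bare inequality $\lambda_1>8|J_1|^2$ is not enough for large $n$; your explicit appeal to \eqref{first-j1} and the Stirling growth of $\Gamma(1+\frac n2)^{4/n}$ (Remark~\ref{remark-1}) is the right way to close this, and you should spell out the one-line verification that $C_1(n)<\frac{16n}{n+2}\Gamma(1+\frac n2)^{4/n}$ for all $n\ge2$ rather than leaving it as ``combining these gives''.
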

\begin{proof}
Let $\{k_j\}_{j\ge 1}$ be a subset of $\mathbb{N}$ such that
$$0<\lambda_1=\cdots=\lambda_{k_1}<\lambda_{k_1+1}=\cdots=\lambda_{k_2}<\lambda_{k_2+1}\cdots. $$
Let us set also
$$F_D(\lambda):= \frac{2}{n+2} \lambda^{1+\frac n2} \frac{|\Omega| |\omega(n)|}{(2\pi)^{n}}\left(1 -\frac{C_1(n)|J_1|^2}{2\lambda}\right)_+-\sum_{k:\,\lambda_k<\lambda}(\lambda-\lambda_k).$$
Obviously, $F_D(\lambda)$ is absolutely continuous on $(0,\infty)$, and smooth in each interval $ (\lambda_{k_{j}},\lambda_{k_{j}+1})$.
Since $\lambda_1>8|J_1|^2$ by \eqref{first-j1}, for $\lambda>\lambda_1$,
$$F_D(\lambda)= \frac{2}{n+2} \lambda^{1+\frac n2} \frac{|\Omega| |\omega(n)|}{(2\pi)^{n}}\left(1 -\frac{C_1(n)|J_1|^2}{2\lambda}\right)-\sum_{k:\,\lambda_k<\lambda}(\lambda-\lambda_k).$$
For $\lambda\in (\lambda_{k_{j}},\lambda_{k_{j}+1})=(\lambda_{k_{j}},\lambda_{k_{j+1}})$, it holds that
$$F_D(\lambda)= \frac{2}{n+2} \lambda^{1+\frac n2} \frac{|\Omega| |\omega(n)|}{(2\pi)^{n}}\left(1 -\frac{C_1(n) |J_1|^2}{2\lambda}\right)-\sum_{i=1}^{j}(k_{i}-k_{i-1})(\lambda-\lambda_{k_{j}}),$$
where we set $k_0=0$.
Moreover, for $\lambda\in (\lambda_{k_{j}},\lambda_{k_{j}+1})$, $j\ge 1$,
\begin{align}\label{F-increasing}
F'_D(\lambda)&=\frac{|\Omega| |\omega(n)|}{(2\pi)^{n}}  \lambda^{\frac n2}\left(1 -\frac{nC_1(n)|J_1|^2}{2(n+2)\lambda }\right)-k_{j}\nonumber\\
&=\frac{|\Omega| |\omega(n)|}{(2\pi)^{n}}  \lambda^{\frac n2-1}\left(\lambda -\frac{nC_1(n)|J_1|^2}{2(n+2) }\right)-k_{j},
\end{align}
which implies that $F'_D(\lambda)$ is increasing on each $(\lambda_{k_{j}},\lambda_{k_{j}+1})$ since   $\lambda_1>8|J_1|^2$.

By Theorem \ref{Dirichlet-2}, we see that for $\lambda>\lambda_1$, it holds that
\begin{align}
F_D(\lambda)= \frac{2}{n+2} \lambda^{1+\frac n2} \frac{|\Omega| |\omega(n)|}{(2\pi)^{n}}\left(1 -\frac{C_1(n)|J_1|^2}{2\lambda}\right)-\sum_{k:\,\lambda_k<\lambda}(\lambda-\lambda_k)>  \frac{C_1(n)|J_1|^2}{n+2} \lambda^{\frac n2} \frac{|\Omega| |\omega(n)|}{(2\pi)^{n}}.
\end{align}
For a given $\lambda_k$, it equals $\lambda_{k_{j_0}}$ for some $k_{j_0}$.
Let $\lambda_{k_{j_1}+1}$ be the smallest eigenvalue that satisfies
$$\lambda_{k_{j_1}+1}\ge \left(\frac{2\lambda_{k_{j_0}}^{1+\frac n2}}{C_1(n)|J_1|^2}\right)^{2/n}.$$
The choice of $ \lambda_{k_{j_0}}$ and $\lambda_{k_{j_1}+1}$ implies  that
$$F_D(\lambda_{k_{j_1}+1})>F_D(\lambda_{k_{j_0}}),$$
and hence,
$$F_D(\lambda_{k_{j_1}+1})-F_D(\lambda_{k_{j_0}})=\int_{\lambda_{k_{j_0}}}^{\lambda_{k_{j_0}+1}} F'_D(\lambda)\,d\lambda=\sum_{j_0\le j\le j_1}\int_{\lambda_{k_{j}}}^{\lambda_{k_{j}+1}} F'_D(\lambda)\,d\lambda>0.$$
This together the piecewise smoothness of $F'_D$ implies that $F_D$ is increasing at least on part of an interval $( \lambda_{k_{j}}, \lambda_{k_{j}+1})$,
and consequently, $F'_D(\lambda)$ is positive on part of it. Since $F'_D(\lambda)$ is increasing on each $(\lambda_{k_{i}},\lambda_{k_{i}+1})$,
we see that
$$ F_D'(\lambda_{k_{j}+1}-\epsilon)=\frac{|\Omega| |\omega(n)|}{(2\pi)^{n}}  (\lambda_{k_{j}+1}-\epsilon)^{\frac n2}\left(1-\frac{nC_1(n)|J_1|^2}{2(n+2)(\lambda_{k_{j}+1}-\epsilon)}\right)-k_{j}>0$$
for some sufficiently small $\epsilon>0$.
Sending $\epsilon$ to zero gives
$$\frac{|\Omega| |\omega(n)|}{(2\pi)^{n}}  \lambda_{k_{j}+1}^{\frac n2}\ge k_{j}+\frac{|\Omega| |\omega(n)|}{(2\pi)^{n}} \frac{nC_1(n)|J_1|^2}{2(n+2)}\lambda_{k_{j}+1}^{\frac n2-1}. $$
The proof is complete.
\end{proof}
\begin{proof}[Proof of Corollary \ref{cor-infinite-polya}]
For $n\ge 3$, $\lambda_{k_j+1}^{\frac n2-1}$ tends to infinity as $k_j\to \infty$, and the conclusion follows from Theorem \ref{infinite-dirichlet-polya} immediately.
\end{proof}

\section{Improved Kr\"oger inequality}
\hskip\parindent  In this section, we provide proofs for Theorem \ref{Neumann}.
We assume that $-\Delta_N$ on $\Omega$ has discrete spectrum so that
$$0=\gamma_1\le \gamma_2\le \cdots.$$
Let $u_k$ be the corresponding normalized eigenfunctions to $\gamma_k$. Then for each $\xi\in\rn$ we have
\begin{align*}
\sum_{k=1}^\infty |\hat{u}_k(\xi)|^2=(2\pi)^{-n} |\Omega|
\end{align*}
and
\begin{align*}
\sum_{k=1}^\infty \gamma_k|\hat{u}_k(\xi)|^2=(2\pi)^{-n} |\xi|^2|\Omega|.
\end{align*}
The two identity together implies that
\begin{align}\label{lower-neumann-function}
\sum_{k:\,\gamma_k<\gamma} (\gamma-\gamma_k)|\hat{u}_k(\xi)|^2\ge
\begin{cases}
(2\pi)^{-n}(\gamma-|\xi|^2)|\Omega|, & |\xi|<\sqrt\gamma,\\
0, & |\xi|\ge \sqrt\gamma.
\end{cases}
\end{align}

We can now prove Theorem \ref{Neumann}.
\begin{proof}[Proof of Theorem \ref{Neumann}]
To estimate
$$\sum_{k:\,\gamma_k<\gamma} (\gamma-\gamma_k)=\sum_{k:\,\gamma_k<\gamma} \int_{\rn}(\gamma-\gamma_k)|\hat{u}_k(\xi)|^2\,d\xi,$$
let us denote by $F_\gamma(\xi)$ the quantity
$$F_\gamma(\xi)=\left(\sum_{k:\,\gamma_k<\gamma} (\gamma-\gamma_k)|\hat{u}_k(\xi)|^2\right)^{1/2}.$$

The function $F_\gamma(\xi)$ is Lipschitz continuous on $\rn$ which satisfies
\begin{align*}
|\nabla F_\gamma(\xi)|&\le \frac{1}{F_\gamma(\xi)} \sum_{k:\,\gamma_k<\gamma} (\gamma-\gamma_k) |\hat{u}_k(\xi)| \frac{1}{(2\pi)^{n/2}}\left|\int_\Omega xe^{-ix\cdot\xi}u_k(x)\,dx\right| \\
&\le \left(\sum_{k:\,\gamma_k<\gamma} (\gamma-\gamma_k) \frac{1}{(2\pi)^{n}}\left|\int_\Omega xe^{-ix\cdot\xi}u_k(x)\,dx\right|^2\right)^{1/2}\\
&\le \sqrt{\gamma}  \left( \frac{1}{(2\pi)^{n}}\int_\Omega x^2\,dx\right)^{1/2}.
\end{align*}
By suitably moving the position of $\Omega$, we see that it further holds
\begin{align*}
|\nabla F_\gamma(\xi)|&\le \frac{\sqrt\gamma}{(2\pi)^{n/2}}\left(\inf_{x_0\in\rn}\int_{\Omega}|x-x_0|^2\,dx\right)^{1/2}= \frac{\sqrt\gamma}{(2\pi)^{n/2}}I(\Omega)^{1/2}.
\end{align*}

Let us denote by $F_\gamma^\ast$ the Schwarz symmetrization of $F_\gamma$.
Lemma \ref{lem-1} tells us that
\begin{align*}
|\nabla F_\gamma^\ast(\xi)|&\le \||\nabla F_\gamma^\ast\|_{L^\infty(\rn)} \le  \frac{\sqrt\gamma}{(2\pi)^{n/2}}I(\Omega)^{1/2}.
\end{align*}

On the other hand, note that it follows from \eqref{lower-neumann-function} that
$$F_\gamma(\xi)\ge \frac{|\Omega|^{1/2}}{(2\pi)^{n/2}}\sqrt{(\gamma-|\xi|^2)_+},$$
which implies that
$$F_\gamma^\ast(\xi)\ge \frac{|\Omega|^{1/2}}{(2\pi)^{n/2}}\sqrt{(\gamma-|\xi|^2)_+}. $$

Set
$$\varphi_\gamma(r):=\frac{|\Omega|^{1/2}}{(2\pi)^{n/2}}\sqrt{(\gamma-r^2)_+}.$$
Then for $r<\sqrt\gamma$,
$$\varphi'_\gamma(r)=\frac{|\Omega|^{1/2}}{(2\pi)^{n/2}}\frac{-r}{\sqrt{\gamma-r^2}},$$
which is decreasing on $(0,\sqrt\gamma)$ and tends to $-\infty$ as $r\to\sqrt\gamma$.

Consider the equation
$$\frac{|\Omega|^{1/2}}{(2\pi)^{n/2}}\frac{r}{\sqrt{\gamma-r^2}}=\frac{\sqrt\gamma}{(2\pi)^{n/2}}I(\Omega)^{1/2},$$
which has a unique positive solution
$$r_0=\frac{\gamma}{\sqrt{4J_1^2+\gamma}}.$$

Since
$$F_\gamma^\ast(\xi)=F_\gamma^\ast(|\xi|)\ge \varphi_\gamma(|\xi|), \,\&\, |\nabla F_\gamma^\ast(\xi)|\le \frac{\sqrt\gamma}{(2\pi)^{n/2}}I(\Omega)^{1/2},$$
we find that
\begin{align}
F_\gamma^\ast(\xi)\ge &
\begin{cases}
\frac{|\Omega|^{1/2}}{(2\pi)^{n/2}}\sqrt{(\gamma-|\xi|^2)}, &\, |\xi|\le r_0,\\
\frac{|\Omega|^{1/2}}{(2\pi)^{n/2}}\frac{2J_1\sqrt\gamma}{\sqrt{4J_1^2+\gamma}}-\frac{\sqrt\gamma}{(2\pi)^{n/2}}I(\Omega)^{1/2} (|\xi|-r_0), & \, r_0<|\xi|<r_0+\frac{4J_1^2}{\sqrt{4J_1^2+\gamma}},\\
0,&\, |\xi|\ge r_0+\frac{4J_1^2}{\sqrt{4J_1^2+\gamma}}.
\end{cases}
\end{align}
Note that
$$r_0+\frac{4J_1^2}{\sqrt{4J_1^2+\gamma}}=\sqrt{4J_1^2+\gamma}=:r_1,$$
and so for $r_0<|\xi|<r_1$,
\begin{align*}
F_\gamma^\ast(\xi)&\ge \frac{|\Omega|^{1/2}}{(2\pi)^{n/2}}\frac{2J_1\sqrt\gamma}{\sqrt{4J_1^2+\gamma}}-\frac{\sqrt\gamma}{(2\pi)^{n/2}}I(\Omega)^{1/2} (|\xi|-r_0)\\
&=\frac{\sqrt\gamma}{(2\pi)^{n/2}}I(\Omega)^{1/2} (r_1-|\xi|)\\
&\ge \frac{|\Omega|^{1/2}}{(2\pi)^{n/2}}\sqrt{(\gamma-|\xi|^2)_+}.
\end{align*}

We therefore conclude that
\begin{align*}
\sum_{k:\,\gamma_k<\gamma} (\gamma-\gamma_k)&=\sum_{k:\,\gamma_k<\gamma} \int_{\rn}(\gamma-\gamma_k)|\hat{u}_k(\xi)|^2\,d\xi\\
&=\int_{\rn} [F_\gamma(\xi)]^2\,d\xi=\int_{\rn}  [F_\gamma^\ast(\xi)]^2\,d\xi\\
&=\int_0^\infty   {n} {\omega(n) } [F_\gamma^\ast(r)]^2 r^{n-1}\,dr\\
&\ge \int_0^{r_0}  {n}{\omega(n) r^{n-1} } \frac{|\Omega|}{(2\pi)^{n}}{(\gamma-r^2)}\,dr \\
&\quad+\int_{r_0}^{r_1} {n}{\omega(n) r^{n-1} } \frac{|I(\Omega)|\gamma}{(2\pi)^{n}}\left(r_1-r\right)^2\,dr\\
&\ge \int_0^{\sqrt\gamma}  {n}{\omega(n) r^{n-1} } \frac{|\Omega|}{(2\pi)^{n}}{(\gamma-r^2)}\,dr\\
&\quad+\int_{\sqrt\gamma}^{r_1} {n}{\omega(n) r^{n-1} } \frac{|I(\Omega)|\gamma}{(2\pi)^{n}}\left(r_1-r\right)^2\,dr.
\end{align*}

For the first integral, we have
\begin{align*}
\int_0^{\sqrt\gamma}  {n}{\omega(n) r^{n-1} } \frac{|\Omega|}{(2\pi)^{n}}{(\lambda-r^2)}\,dr&=\frac{\omega(n)|\Omega|}{(2\pi)^{n}} \frac{2\gamma^{1+\frac n2}}{n+2}.
\end{align*}
For the second integral, it holds that
\begin{align*}
\int_{\sqrt\gamma}^{r_1} {n}{\omega(n) r^{n-1} } \frac{|I(\Omega)|\gamma}{(2\pi)^{n}}\left(r_1-r\right)^2\,dr
&\ge \frac{n|I(\Omega)|\omega(n) \gamma}{3(2\pi)^{n}} \gamma^{\frac{n-1}{2}} \left(r_1-\sqrt\gamma\right)^3\\
&\ge  \frac{n|I(\Omega)|\omega(n) \gamma}{3(2\pi)^{n}} \gamma^{\frac{n-1}{2}}  \left(\frac{4J_1^2}{\sqrt{4J_1^2+\gamma}+\sqrt\gamma}\right)^3\\
&\ge \frac{n|I(\Omega)|\omega(n) }{3(2\pi)^{n}} \gamma^{\frac{n+1}{2}} \frac{8J_1^6}{(4J_1^2+\gamma)^{3/2}} \\
&= \frac{n|\Omega|\omega(n) }{3(2\pi)^{n}} \gamma^{\frac{n+1}{2}} \frac{2J_1^4}{(4J_1^2+\gamma)^{3/2}}
\end{align*}
Summing up the above two estimates and using the definition of $r_0,r_1$ we arrive at
\begin{align*}
\sum_{k:\,\gamma_k<\gamma} (\gamma-\gamma_k)&\ge \frac{\omega(n)|\Omega|}{(2\pi)^{n}} \frac{2\gamma^{1+\frac n2}}{n+2}+  \frac{n|\Omega|\omega(n) }{3(2\pi)^{n}} \gamma^{\frac{n+1}{2}} \frac{2J_1^4}{(4J_1^2+\gamma)^{3/2}}\\
&=\frac{\omega(n)|\Omega|}{(2\pi)^{n}} \frac{2\gamma^{1+\frac n2}}{n+2}\left(1+\frac{n(n+2) }{3 } \frac{J_1^4}{\gamma^{1/2} (4J_1^2+\gamma)^{3/2}}\right).
\end{align*}
The proof is complete.
\end{proof}

\begin{thm}\label{thm-infinite-neumann}
Let $\Omega$ be a bounded open set in $\rn$, $n\ge 3$. Suppose that $-\Delta_N$ has a discrete spectrum
$$0= \gamma_1\le \gamma_2\le \gamma_3\le \cdots.$$
For any given $\gamma_j\ge 4|J_1|^2$, let $\gamma_{j_1}$ be the smallest eigenvalue satisfying
$$\gamma_{{j_1}}\ge \left( j\frac{(2\pi)^{n}}{\omega(n)|\Omega|}\right)^{\frac{2(n+2)}{n(n-2)}}  \left( 3\left(\frac{n+2}{2}\right)^{2/n} \frac{2^{3/2}}{nJ_1^4}\right)^{\frac{2}{n-2}}.$$
 there exists $\gamma_{k_j} \in [\gamma_j, \gamma_{j_1}]$ satisfying
$$ k_{j}- \frac{J_1^4}{2^{3/2}} \frac{\omega(n)|\Omega|}{(2\pi)^{n}}\frac{n}{3}\frac{n-2}{2}\gamma_{k_{j}}^{\frac{n}{2}-2} \ge \frac{\omega(n)|\Omega|}{(2\pi)^{n}} \gamma_{k_{j}}^{n/2}. $$
\end{thm}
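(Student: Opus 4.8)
The plan is to transplant the proof of Theorem~\ref{infinite-dirichlet-polya} to the Neumann setting, with the improved Kr\"oger inequality (Theorem~\ref{Neumann}) replacing the improved Berezin--Li--Yau inequality. Following the bookkeeping there, write $0=\gamma_1=\cdots=\gamma_{k_1}<\gamma_{k_1+1}=\cdots=\gamma_{k_2}<\cdots$ for the enumeration of the distinct eigenvalue values ($k_0:=0$), so that $k_i=\#\{k:\gamma_k\le\gamma_{k_i}\}$. Since $4J_1^2+\gamma\le 2\gamma$ for $\gamma\ge 4J_1^2$, Theorem~\ref{Neumann} gives, for such $\gamma$,
\begin{align*}
\sum_{k:\,\gamma_k<\gamma}(\gamma-\gamma_k)\ \ge\ \frac{\omega(n)|\Omega|}{(2\pi)^{n}}\frac{2\gamma^{1+\frac n2}}{n+2}+E(\gamma),\qquad E(\gamma):=\frac{\omega(n)|\Omega|}{(2\pi)^{n}}\,\frac{nJ_1^4}{3\sqrt2}\,\gamma^{\frac{n-2}{2}}.
\end{align*}
I then set
\begin{align*}
F_N(\gamma):=\sum_{k:\,\gamma_k<\gamma}(\gamma-\gamma_k)-\frac{\omega(n)|\Omega|}{(2\pi)^{n}}\frac{2\gamma^{1+\frac n2}}{n+2}-\tfrac12E(\gamma),
\end{align*}
so that $F_N(\gamma)\ge\tfrac12E(\gamma)>0$ for $\gamma\ge 4J_1^2$. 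As in the Dirichlet case, $F_N$ is absolutely continuous and, on each $(\gamma_{k_i},\gamma_{k_{i+1}})$, smooth with $F_N'(\gamma)=k_i-g(\gamma)$, where
\begin{align*}
g(\gamma):=\frac{\omega(n)|\Omega|}{(2\pi)^{n}}\gamma^{\frac n2}+\tfrac12E'(\gamma),\qquad \tfrac12E'(\gamma)=\frac{J_1^4}{2^{3/2}}\frac{\omega(n)|\Omega|}{(2\pi)^{n}}\frac n3\frac{n-2}{2}\gamma^{\frac n2-2};
\end{align*}
note that $\tfrac12E'$ is exactly the correction term appearing in the asserted inequality.

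The next step is to check that $g$ is increasing on $[4J_1^2,\infty)$ for every $n\ge3$. Its first summand is increasing; its second is increasing for $n\ge4$, while for $n=3$ a one-line computation shows $g'(\gamma)\ge0$ once $\gamma$ exceeds a small multiple of $J_1^2$, which is comfortably within $[4J_1^2,\infty)$. Consequently $F_N'$ is \emph{decreasing} on each interval $(\gamma_{k_i},\gamma_{k_{i+1}})\subset[4J_1^2,\infty)$; in particular, if $F_N'(\gamma_{k_i}^+)<0$ then $F_N'<0$ on all of that interval, and since $F_N$ is continuous across eigenvalues, if $F_N'(\mu^+)<0$ at every eigenvalue $\mu\in[\gamma_j,\gamma_{j_1})\subset[4J_1^2,\infty)$ then $F_N$ is strictly decreasing on $[\gamma_j,\gamma_{j_1}]$.

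Now fix $\gamma_j\ge 4J_1^2$, denote by $B_j$ the right-hand side of the inequality defining $\gamma_{j_1}$, and take $\gamma_{j_1}$ to be the smallest eigenvalue with $\gamma_{j_1}\ge B_j$ (it exists, the spectrum being discrete and hence unbounded; and $B_j\ge\gamma_j$ follows from \eqref{J-diameter} and \eqref{kroger-eigenvalue}). Suppose, toward a contradiction, that $F_N'(\gamma_{k_i}^+)<0$ at every eigenvalue $\gamma_{k_i}\in[\gamma_j,\gamma_{j_1})$; then $F_N$ is strictly decreasing on $[\gamma_j,\gamma_{j_1}]$, so $F_N(\gamma_{j_1})<F_N(\gamma_j)$. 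On the one hand, since $E$ is increasing ($n\ge3$),
\begin{align*}
F_N(\gamma_{j_1})\ \ge\ \tfrac12E(\gamma_{j_1})\ \ge\ \tfrac12E(B_j)\ =\ \Big(\tfrac{n+2}{2}\Big)^{2/n}j^{1+\frac2n}\frac{(2\pi)^2}{|\Omega|^{2/n}\omega(n)^{2/n}},
\end{align*}
the last equality being precisely the algebraic identity for which $B_j$ is designed. On the other hand, dropping the two nonnegative subtracted terms in $F_N$, bounding each of the at most $j-1$ summands $\gamma_j-\gamma_k$ by $\gamma_j$, and then using Kr\"oger's eigenvalue inequality \eqref{kroger-eigenvalue} (with $k=j-1$),
\begin{align*}
F_N(\gamma_j)\ \le\ (j-1)\gamma_j\ \le\ \Big(\tfrac{n+2}{2}\Big)^{2/n}(j-1)^{1+\frac2n}\frac{(2\pi)^2}{|\Omega|^{2/n}\omega(n)^{2/n}}.
\end{align*}
Since $(j-1)^{1+2/n}<j^{1+2/n}$, these two estimates contradict $F_N(\gamma_{j_1})<F_N(\gamma_j)$. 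Hence some eigenvalue $\gamma_{k_j}\in[\gamma_j,\gamma_{j_1}]$ satisfies $F_N'(\gamma_{k_j}^+)\ge0$, i.e.\ $k_j\ge g(\gamma_{k_j})$, which on unwinding the definition of $g$ is exactly
\begin{align*}
k_j-\frac{J_1^4}{2^{3/2}}\frac{\omega(n)|\Omega|}{(2\pi)^{n}}\frac n3\frac{n-2}{2}\gamma_{k_j}^{\frac n2-2}\ \ge\ \frac{\omega(n)|\Omega|}{(2\pi)^{n}}\gamma_{k_j}^{n/2}.
\end{align*}

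The only genuinely new difficulty, compared with Theorem~\ref{infinite-dirichlet-polya}, is the upper bound on $F_N$ at the left endpoint: in the Dirichlet case the Berezin--Li--Yau inequality supplies a clean upper bound for the Riesz mean, hence for $F_D$, whereas there is no such universal upper bound for Neumann Riesz means, so one must instead use the trivial estimate $(j-1)\gamma_j$ and feed in the classical (non-improved) Kr\"oger eigenvalue inequality to turn it into a power of the index $j$ — this is exactly why the threshold $\gamma_{j_1}$ is expressed through $j$ rather than through $\gamma_j$. The remaining ingredients, namely the monotonicity of $g$ for $n=3$ and the purely algebraic identity pinning down $B_j$, are routine.
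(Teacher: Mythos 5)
Your proof is correct and follows essentially the same route as the paper's: the same test function $F_N$ with the halved correction term, the same piecewise computation of $F_N'$, the same comparison of $F_N$ at the two endpoints via Kr\"oger's bound \eqref{kroger-eigenvalue}, and the same monotonicity of $F_N'$ on each spectral gap to locate the good eigenvalue. The only (welcome) refinement is that you verify the monotonicity of $F_N'$ explicitly for $n=3$ on $[4J_1^2,\infty)$, a point the paper dismisses as obvious.
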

\begin{proof}
Let $\{k_j\}_{j\ge 1}$ be a subset of $\mathbb{N}$ such that
$$0=\gamma_1=\cdots=\gamma_{k_1}<\gamma_{k_1+1}=\cdots=\gamma_{k_2}<\gamma_{k_2+1}=\cdots=\gamma_{k_3}<\cdots,$$
and $k_0=0$. 
For $\gamma>4|J_1|^2$, set
$$F_N(\gamma):= \sum_{k:\,\gamma_k<\gamma} (\gamma-\gamma_k) -\frac{\omega(n)|\Omega|}{(2\pi)^{n}} \frac{2\gamma^{1+\frac n2}}{n+2}\left(1+\frac{n(n+2) }{6} \frac{J_1^4}{2^{3/2}}\gamma^{-2} \right).$$
Assume that $\gamma_{k_{j_0}}>4|J_1|^2$. For $j_1>j_0$, $\gamma\in (\gamma_{k_{j_1}},\gamma_{k_{j_1}+1})=(\gamma_{k_{j_1}},\gamma_{k_{{j_1}+1}})$, we have
\begin{align}
F_N(\gamma)= \sum_{j=1}^{j_1} (k_j-k_{j-1}) (\gamma-\gamma_{k_j}) -\frac{\omega(n)|\Omega|}{(2\pi)^{n}} \frac{2\gamma^{1+\frac n2}}{n+2}\left(1+\frac{n(n+2) }{6} \frac{J_1^4}{2^{3/2}}\gamma^{-2} \right),
\end{align}
and hence,
\begin{align}\label{decreasing-fn}
F'_N(\gamma)=
k_{j_1} -\frac{\omega(n)|\Omega|}{(2\pi)^{n}} \gamma^{n/2} -  \frac{J_1^4}{2^{3/2}} \frac{\omega(n)|\Omega|}{(2\pi)^{n}}\frac{n}{3}\frac{n-2}{2}\gamma^{\frac{n}{2}-2}.
\end{align}
Obviously, $F_N'(\gamma)$ decreases on $(\gamma_{k_{j_1}},\gamma_{k_{j_1}+1})$ as $n\ge 3$.

By Theorem \ref{Neumann},
$$F_N(\gamma)>\frac{\omega(n)|\Omega|}{(2\pi)^{n}} \frac{n\gamma^{\frac n2-1}}{3}  \frac{J_1^4}{2^{3/2}} $$
for $\gamma>4|J_1|^2$.
For a given $\gamma_j>4|J_1|^2$, let $\gamma_{k_{j_0}}=\gamma_j$, and $\gamma_{k_{j_1}+1}$ be the smallest eigenvalue satisfying
$$\frac{\omega(n)|\Omega|}{(2\pi)^{n}} \frac{n\gamma_{k_{j_1}+1}^{\frac n2-1}}{3} \frac{J_1^4}{2^{3/2}}\ge j^{1+2/n}\left(\frac{n+2}{2}\right)^{2/n} \frac{(2\pi)^2}{ |\Omega|^{2/n}\omega(n)^{2/n}},$$
which is equivalent to
$$\gamma_{k_{j_1}+1}\ge \left( j\frac{(2\pi)^{n}}{\omega(n)|\Omega|}\right)^{\frac{2(n+2)}{n(n-2)}}  \left( 3\left(\frac{n+2}{2}\right)^{2/n} \frac{2^{3/2}}{nJ_1^4}\right)^{\frac{2}{n-2}}.$$

By \eqref{kroger-eigenvalue} together with Theorem \ref{Neumann}, we  have
$$F_N(\gamma_{k_{j_1}+1})>j\gamma_j>F_N(\gamma_{j})=F_N(\gamma_{k_{j_0}}).$$
This implies that there exists $(\gamma_{k_j},\gamma_{k_j+1})\subset (\gamma_{k_{j_0}},\gamma_{k_{j_1}+1})$ such that it contains a small interval that $F_N(\gamma)$ is increasing.
By \eqref{decreasing-fn}, this means there exists $\epsilon_j>0$ such that
 $F_N(\gamma)$ is increasing on $(\gamma_{k_{j}}, \gamma_{k_{j}}+\epsilon_j),$
 and consequently,
$$ k_{j} -\frac{\omega(n)|\Omega|}{(2\pi)^{n}} \gamma_{k_{j_1}}^{n/2} -  \frac{J_1^4}{2^{3/2}} \frac{\omega(n)|\Omega|}{(2\pi)^{n}}\frac{n}{3}\frac{n-2}{2}\gamma_{k_{j_1}}^{\frac{n}{2}-2} \ge 0.$$
This is equivalent to
$$ k_{j_1}- \frac{J_1^4}{2^{3/2}} \frac{\omega(n)|\Omega|}{(2\pi)^{n}}\frac{n}{3}\frac{n-2}{2}\gamma_{k_{j_1}}^{\frac{n}{2}-2} \ge \frac{\omega(n)|\Omega|}{(2\pi)^{n}} \gamma_{k_{j_1}}^{n/2},$$
which completes the proof.
\end{proof}
\begin{proof}[Proof of Corollary \ref{cor-infinite-neumann}]
If $n\ge 5$, then $\gamma_{k_j}^{\frac{n}{2}-2}\to \infty$ as $k_j\to\infty$, the conclusion follows from Theorem \ref{thm-infinite-neumann}.
\end{proof}

\subsection*{Acknowledgments}
\addcontentsline{toc}{section}{Acknowledgments} \hskip\parindent
R. Jiang was partially supported by NNSF of China (12471094 \& 11922114),  F.H. Lin
was in part supported by the National Science Foundation Grant  DMS2247773 and DMS1955249.

\noindent Zaihui Gan \\
\noindent Center for Applied Mathematics\\
\noindent Tianjin University\\
\noindent Tianjin 300072, China\\
\noindent ganzaihui2008cn@tju.edu.cn

\

\noindent Renjin Jiang \\
\noindent Academy for Multidisciplinary Studies\\
\noindent Capital Normal University\\
\noindent Beijing 100048, China\\
\noindent {rejiang@cnu.edu.cn}

\

\noindent Fanghua Lin\\
\noindent Courant Institute of Mathematical Sciences\\
\noindent 251 Mercer Street, New York, NY 10012, USA \\
\noindent linf@cims.nyu.edu

\end{document}